\tiny\color{gray},								
\definecolor{shadecolor}{gray}{0.90}				
\def\boitegrise#1#2{\begin{centerline}{\fcolorbox{black}{shadecolor}{~
    \begin{minipage}[t]{#2}{\vphantom{~}#1\vphantom{$A_{\displaystyle{A_A}}$}}
            \end{minipage}~}}\end{centerline}\medskip}
\newtheorem{theorem}{Theorem}[section] 
\newtheorem{proposition}[theorem]{Proposition}
\theoremstyle{definition}
\newtheorem{example}[theorem]{Example}
\theoremstyle{definition}
\newtheorem{definition}[theorem]{Definition}
\theoremstyle{definition}
\newtheorem{observation}[theorem]{Observation}
\theoremstyle{definition}
\newtheorem{remark}[theorem]{Remark}
\theoremstyle{definition}
\theoremstyle{definition}
\newtheorem{convention}[theorem]{Convention}
\newtheorem{question}{Question}
\newcommand{\nocontentsline}[3]{}
\newcommand{\tocless}[2]{\bgroup\let\addcontentsline=\nocontentsline#1{#2}\egroup}
\DeclareMathOperator{\Mat}{Mat}		
\DeclareMathOperator{\QP}{\mathit{Q{\mkern-1.7mu}P}_{\!\!\textit{M}}}
\DeclareMathOperator{\ggcd}{gcd}
\newcommand{\Hforest}[1]{H_{(x, y)}^{#1}}       
\newcommand{\RotateRight}{\mbox{\scalebox{1.2}{$\circlearrowright$}}}
\newcommand{\RotateLeft}{\mbox{\scalebox{1.2}{$\circlearrowleft$}}}
\newcommand{\optGcd}{\mbox{\footnotesize opt-}\mathrm{Gcd}_M}
\newcommand{\tikzcircle}[2][red,fill=red]{\tikz[baseline=-0.5ex]\draw[#1,radius=#2] (0,0) circle ;}
\numberwithin{figure}{section}
\numberwithin{theorem}{section}
\title{\textcolor{blue}{\textbf{New methods to find patches of invisible integer lattice points}}}
\author{
	Austin Goodrich\thanks{Goodrich was a student at University of Wisconsin-Eau Claire from 2012-2014, completed his bachelors in sociology and statistics at University of Wisconsin-La Crosse in 2018, and received his masters in statistics at University of Wisconsin-La Crosse in 2020. Email: \href{mailto:awgoodie@gmail.com}{\nolinkurl{awgoodie@gmail.com}}}
  \and
  aBa Mbirika\thanks{Mbirika is currently an Associate Professor of Mathematics at University of Wisconsin-Eau Claire.\newline Email: \href{mailto:mbirika@uwec.edu}{\nolinkurl{mbirika@uwec.edu}}}
  \and
	Jasmine Nielsen\thanks{Nielsen completed her bachelors in mathematics at the University of Wisconsin-Eau Claire in 2016 and plans to attend graduate school in data sciences. Email: \href{mailto:jasminemlnielsen@gmail.com}{\nolinkurl{jasminemlnielsen@gmail.com}}}
}
\date{\today}
\begin{document}

\maketitle

\begin{center}
\textit{\normalsize{\textcolor{red}{``Mathematics is the queen of the sciences and number theory is the queen of mathematics.'' \newline -- Carl Friedrich Gauss (1777-1855)}}}
\end{center}

\vfill

\begin{abstract}
It is a surprising fact that the proportion of integer lattice points visible from the origin is exactly $\frac{6}{\pi^2}$, or approximately 60 percent.  Hence, approximately 40 percent of the integer lattice is hidden from the origin.  Since 1971, many have studied a variety of problems involving lattice point visibility, in particular, searching for patterns in that 40 percent of the lattice comprised of invisible points.  One such pattern is a square patch, an $n \times n$ grid of $n^2$ invisible points, which we call a hidden forest.  It is known that there exist arbitrarily large hidden forests in the integer lattice.  However, the methods up to now involve the Chinese Remainder Theorem (CRT) on the rows and columns of matrices with prime number entries, and they have only been able to locate hidden forests very far from the origin.  For example, using this method the closest known $4 \times 4$ hidden forest is over 3 quintillion, or $3 \times 10^{18}$, units away from the origin.  We introduce the concept of quasiprime matrices and utilize a variety of computational and theoretical techniques to find some of the closest known hidden forests to this date.  Using these new techniques, we find a $4 \times 4$ hidden forest that is merely 184 million units away from the origin.  We conjecture that every hidden forest can be found via the CRT-algorithm on a quasiprime matrix.
\end{abstract}

\vfill

\newpage
\tableofcontents




\section{Introduction}

Imagine the plane $\mathbb{R}^2$ as a forest in which each non-origin lattice point in $\mathbb{Z}^2$ is a tree and each tree is infinitely thin yet also opaque.  In this scenario, we say that a tree is hidden if some other tree lies in your line of sight from the origin.

\begin{figure}
\begin{center}
\hspace{-.75in}
\begin{subfigure}[h]{0.45\textwidth}
\begin{tikzpicture}[scale=.8]
\draw [help lines] (0,0) grid (6,6);
\draw [<->, ultra thick, blue] (0,6.5) -- (0,0) -- (6.5,0);
\foreach \x in {1,...,6}
		\node [below] at (\x,0) {\x};
\foreach \x in {1,...,6}
		\node [left] at (0,\x) {\x};
\node at (0,0) {\includegraphics[width=8ex]{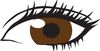}};
\node [above] at (1,3) {\includegraphics[width=2.5ex]{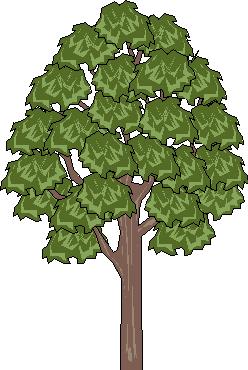}};
\node [above] at (2,1) {\includegraphics[width=2.5ex]{tree}};
\node [above] at (3,4) {\includegraphics[width=2.5ex]{tree}};
\node [above] at (5,1) {\includegraphics[width=2.5ex]{tree}};
\draw[dashed,very thick,red] (0,0) -- (2,6);
\draw[dashed,very thick,red] (0,0) -- (4.5,6);
\draw[dashed,very thick,red] (0,0) -- (6,3);
\draw[dashed,very thick,red] (0,0) -- (6,1.2);
\node [above] at (2,6) {\small(2,6)};
\node [above right] at (3,4) {\small(3,4)};
\node [right] at (6,3) {\small(6,3)};
\node [above right] at (5,1) {\small(5,1)};
\node [above right] at (2,1) {\small(2,1)};
\node [above right] at (1,3) {\small(1,3)};
\node [above right] at (4,2) {\small(4,2)};
\draw plot[only marks, mark=*, mark options={fill=white}, mark size=.6ex] coordinates {(2,6) (4,2) (6,3)};
\draw plot[only marks, mark=*, mark options={fill=black}, mark size=.6ex] coordinates {(1,3) (2,1) ((3,4) (5,1)};
\end{tikzpicture}
\caption{Four visible trees}
\label{fig:four_visible_trees}
\end{subfigure}
\begin{subfigure}[h]{0.45\textwidth}
\begin{tikzpicture}[scale=.32]
\draw [<->, ultra thick, blue] (22,0) -- (0,0) -- (0,16);
\node at (0,0) {\includegraphics[width=8ex]{eye}};
\draw[dashed,red] (0,0) -- (20,14);
\draw[dashed,red] (0,0) -- (21,14);
\draw[dashed,red] (0,0) -- (20,15);
\draw[dashed,red] (0,0) -- (21,15);
\draw[dotted] (20,14) rectangle (21,15);
\node [below right] at (10,7) {\footnotesize(10,7)};
\node [above] at (4,3) {\footnotesize(4,3)};
\node [below] at (3,2) {\footnotesize(3,2)};
\node [above] at (7,5) {\footnotesize(7,5)};
\node [below left] at (20,14) {\footnotesize(20,14)};
\node [below right] at (21,14) {\footnotesize(21,14)};
\node [above left] at (20,15) {\footnotesize(20,15)};
\node [above right] at (21,15) {\footnotesize(21,15)};
\draw plot[only marks, mark=*, mark options={fill=white}, mark size=.8ex] coordinates {(20,14) (20,15) (21,14) (21,15)};
\draw plot[only marks, mark=*, mark options={fill=black}, mark size=.8ex] coordinates {(10,7) (4,3) (3,2) (7,5)};
\end{tikzpicture}
\caption{A $2 \times 2$ hidden forest}
\label{fig:2by2_hidden_forest}
\end{subfigure}
\end{center}
\vspace{-.15in}
\caption{}
\end{figure}

Consider the four lines of sight denoted by the dashed line segments emanating from the origin in Figure~\ref{fig:four_visible_trees}.  In these four lines of sight in the first quadrant, exactly four trees are visible---one per each line of sight.  These visible trees are located at the black bullet points.  Obscured by them are three other trees at the white bullet points, which are not visible from the origin.  The tree at $(2,6)$ is obscured by the visible tree at $(1,3)$, while the tree at $(6,3)$ is obscured by the tree at $(4,2)$, which in turn is obscured by the visible tree at $(2,1)$.  The question of the visibility (or invisibility) of a lattice point from the origin can be recast in a number-theoretic setting, where it turns out that the only visible points are the points $(x,y)$ such that $\gcd(x,y) = 1$.  A proof for this visibility criterion is given in Proposition~\ref{prop:visibility_of_a_point}.

It is well known that approximately $60\%$ of the integer lattice is visible from the origin (see Proposition~\ref{prop:fraction_of_visible_points}).  So a natural question to ask about the approximately $40\%$ of the integer lattice which is hidden from view is the following:
\begin{center}
\textit{Are there arbitrarily large square patches of invisible lattice points?}
\end{center}
The answer to this question is yes, and in this paper we focus on invisible $n \times n$ square patches which we call \textit{hidden forests}.  An example of a $2 \times 2$ hidden forest is given in Figure~\ref{fig:2by2_hidden_forest}.  In this figure we note the specific four visible trees that obscure this hidden forest.

Lattice point visibility is a well-studied subject that arises in a variety of areas such as number theory, integer optimization, and even theoretical physics (see Ch.10.4 of~\cite{Brass} for a brief survey).  In 1971, Herzog and Stewart studied patterns of both visible and invisible lattice points; one such invisible pattern they explored is the one we call a hidden forest~\cite{Herzog}.  In 1990, Schumer also examined hidden forests~\cite{Schumer}.  He used the Chinese Remainder Theorem (in a form similar to our Theorem~\ref{thm:2by2_case}) and gave an example of a $3 \times 3$ hidden forest very far from the origin and questioned whether a closer one could be found.  He then noted that finding a $4 \times 4$ forest would require solving systems of linear congruence equations modulo the product of the first 16 primes, the so-called 16th \textit{primorial} which is approximately 32 quintillion, and declared, ``Such a project is beyond the courage of this author!''  In this paper we not only undertake this task of finding closer $4 \times 4$ hidden forests, but also introduce a variety of theoretical and computational techniques to aid in finding the closest known $n \times n$ hidden forests for $n \geq 4$, a task which has not yet been done to this date.  The paper is broken down as follows:
\begin{itemize}
\item Section~\ref{sec:brief_overview}: We give a brief overview of lattice point visibility and provide a detailed proof of the well-known result that the probability of two randomly selected integers being relatively prime is $\frac{6}{\pi^2}$.
\item Section~\ref{sec:the_CRT_algorithm}: We give the known method of finding hidden forests in Subsection~\ref{subsec:known_method}.  Given $n \in \mathbb{N}$ and a prime matrix $P_n$, there exists an $n \times n$ hidden forest $\Hforest{n}$ in the first quadrant with bottom-left corner $(x,y)$ that is found by applying the Chinese Remainder Theorem to the rows and columns of $P_n$.  We denote this process by the term \texttt{CRT}-algorithm. In Subsection~\ref{subsec:gcd-grid_and_matrix}, we elaborate on the relationship between a prime matrix $P_n$ and its hidden forest $\Hforest{n}$ with the introduction of an object called a \textit{gcd-matrix}.  In Subsection~\ref{subsec:applications_of_known_method}, we apply this method to find hidden forests $\Hforest{n}$ for $n=2,3,4$.
\item Section~\ref{sec:quasiprime}: We introduce the concept of a \textit{quasiprime matrix} $\QP$ and the \texttt{QP}-algorithm in Subsection~\ref{subsec:quasi} and define the method of \textit{strings of strongly composite integers} in Subsection~\ref{subsec:strings}.  In Subsection~\ref{subsec:min_prime_factors}, we explore the notion of an \textit{optimal gcd-matrix} by considering the minimal number of prime factors required in a quasiprime matrix to produce an $n \times n$ hidden forest.  With these three tools and some computational programming techniques, we then use the \texttt{CRT}-algorithm on quasiprime matrices to find $n \times n$ hidden forests. The resulting hidden forests turn out to be much closer to the origin than those found by the traditional method (given in Section~\ref{sec:the_CRT_algorithm}).

\item Section~\ref{sec:the_great_unknown}: We combine the techniques detailed in the previous section to find the closest known (to this date) $5 \times 5$ hidden forest.
\item Section~\ref{sec:open_problems}: We give a selection of open problems.  We also briefly review some recent research between second author Mbirika and collaborators Goins, Harris, and Kubik, generalizing this classic setting of straight lines of sights to curved lines of sights~\cite{GHKM2017}.
\end{itemize}

\section*{Acknowledgments}
We thank Stephan Garcia of Pomona College who introduced co-author aBa Mbirika to the concept of these hidden forests at the AIM-NSF research workshop, REUF4, at ICERM in June 2012.  We also thank the Office of Research and Sponsored Programs at the University of Wisconsin-Eau Claire (UWEC) who funded this project from Fall 2013 through Summer 2014.  We gratefully thank Zane Toman who helped us with his Java coding guidance.  Lastly, we appreciate the use of the computing resources of the Blugold Supercomputing Cluster of UWEC. Without access to its unending hard work and processing power, the immense calculations that we needed probably would not have been possible to complete within our lifetime.


\section{Density of visible lattice points in \texorpdfstring{$\mathbb{Z}^2$}{Z x Z}}\label{sec:brief_overview}

As mentioned in the introduction, a criterion for the visibility of an integer lattice point can be recast in the number-theoretic setting as the following proposition shows.

\begin{proposition}\label{prop:visibility_of_a_point}
Let $(x,y) \in \mathbb{Z}^2 \setminus \{(0,0)\}$.  Then $(x,y)$ is visible if and only if $\gcd(x,y)=1$.
\end{proposition}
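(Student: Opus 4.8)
The plan is to prove both directions of the biconditional by connecting the geometry of lines of sight to divisibility. First I would set up notation: a point $(x,y) \in \mathbb{Z}^2 \setminus \{(0,0)\}$ is \emph{visible} from the origin precisely when no other lattice point lies on the open segment between $(0,0)$ and $(x,y)$. Every point on the line through the origin and $(x,y)$ has the form $t(x,y)$ for $t \in \mathbb{R}$, so the question becomes: for which $t \in (0,1)$ is $(tx, ty)$ an integer point?

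For the contrapositive of the forward direction, suppose $\gcd(x,y) = d > 1$. Then $\left(\frac{x}{d}, \frac{y}{d}\right)$ is a lattice point, it lies on the segment from $(0,0)$ to $(x,y)$ (taking $t = 1/d \in (0,1)$), and it is distinct from both endpoints. Hence $(x,y)$ is not visible, which proves that visibility implies $\gcd(x,y) = 1$. For the reverse direction, suppose $\gcd(x,y) = 1$ and assume for contradiction that some lattice point $(a,b)$ lies strictly between $(0,0)$ and $(x,y)$. Then $(a,b) = t(x,y)$ for some $t \in (0,1)$, so $a = tx$ and $b = ty$; writing $t$ is forced to be rational, say $t = p/q$ in lowest terms with $q \geq 2$, I would argue that $q \mid x$ and $q \mid y$ (since $a = px/q$ and $b = py/q$ are integers with $\gcd(p,q)=1$), contradicting $\gcd(x,y) = 1$. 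Therefore no such intermediate point exists and $(x,y)$ is visible.

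One mild subtlety worth handling cleanly is the case where $x = 0$ or $y = 0$ (points on the axes): here $\gcd(0, y) = |y|$, so the only visible axis points are $(\pm 1, 0)$ and $(0, \pm 1)$, which matches the geometric picture, and the argument above goes through verbatim once one interprets $\gcd$ with a zero argument in the standard way. The main obstacle is not any deep difficulty but rather being careful about the divisibility bookkeeping in the reverse direction: justifying that $t$ must be rational with denominator dividing both coordinates requires a clean appeal to the fact that $\gcd(p,q) = 1$ forces $q \mid x$ from $q \mid px$. This is routine via Euclid's lemma, and the whole proof should be short.
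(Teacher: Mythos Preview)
Your proof is correct and follows essentially the same approach as the paper: both use the point $(x/d,y/d)$ to show that $\gcd(x,y)>1$ implies invisibility, and both argue the converse by contradiction via an obstructing lattice point on the open segment. Your treatment of the converse is in fact slightly more careful than the paper's, since you justify (via $t=p/q$ in lowest terms and Euclid's lemma) why the obstructing point forces a common divisor $q>1$, whereas the paper simply asserts the existence of an integer $c>1$ with $(x,y)=(cx_0,cy_0)$.
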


\begin{proof}
Let $(x,y)$ be a non-origin point in $\mathbb{Z}^2$.  Suppose $d = \gcd(x,y)$.  If $d>1$ then $\left( \frac{x}{d}, \frac{y}{d} \right)$ lies strictly between the points $(0,0)$ and $(x,y)$, and hence $(x,y)$ is not visible.  Thus $(x,y)$ visible implies that $\gcd(x,y) = 1$.

Conversely, assume that $d=1$ and suppose by way of contradiction that $(x,y)$ is not visible from the origin.  Then there is a point $(x_0,y_0) \in \mathbb{Z}^2$ such that $(x,y) = (cx_0,cy_0)$ for some integer $c>1$.  That is, $c$ divides both $x$ and $y$.  But $d=1$ is the greatest common divisor of $x$ and $y$, contradicting that $c>1$.  Thus if $\gcd(x,y)=1$ then $(x,y)$ is visible.
\end{proof}

Now that we have a simple criterion for an integer lattice point's visibility, it is natural to inquire what fraction of integer lattice points are visible from the origin.  That is, we ask:
\begin{center}
\textit{What is the density of visible lattice points in $\mathbb{Z}^2$?}
\end{center}
Let $T(n)$ equal the total number of integer lattice points in an $n \times n$ square centered at the origin, and let $V(n)$ equal the number of these points visible from the origin.  Then it suffices to compute the limit of $\frac{V(n)}{T(n)}$ as $n$ approaches infinity.  It turns out this limit is $\frac{6}{\pi^2}$.  Proofs of this famous result are well known with the earliest proofs given in the late 19th century (see references in Remark~\ref{rem:historical_background}).  Many modern solutions involve the M\"obius inversion formula and Euler's totient function.  In Proposition~\ref{prop:fraction_of_visible_points}, we provide an alternative proof that is essentially an application of Euler's famous product formula and utilizes the number-theoretic criterion for the visibility of a lattice point given in Proposition~\ref{prop:visibility_of_a_point}.

\begin{remark}[Historical background to the problem]\label{rem:historical_background}
The historical record of the original authorship of the result in Proposition~\ref{prop:fraction_of_visible_points} is inaccurately described on a number of occasions in the literature.  Originally, the question on the probability of two random integers being coprime was raised in 1881 by Ces\`aro~\cite{Ces1881}.  Two years later, he and Sylvester independently proved the result~\cite{Ces1883} and \cite{Syl1883}, respectively.  Earlier in 1849, Dirichlet proved a slightly weaker form of the result~\cite{Dir1849}.  The generalization to $k$ coprime integers with $k>2$ was presented again by Ces\`aro in 1884~\cite{Ces1884}.  This result was apparently proven independently in 1900 by Lehmer~\cite{Leh1900}.  
\end{remark}

\begin{remark}
Since there is no uniform distribution on the natural numbers, it is somewhat imprecise to speak about the probability that two integers chosen at random are relatively prime.  However, if we consider the uniform distribution on the set $\{1,2, \ldots, n \}$ and take the limit as $n$ approaches infinity, then it is within this context that we make any probability statements in Proposition~\ref{prop:fraction_of_visible_points}.
\end{remark}

\begin{proposition}\label{prop:fraction_of_visible_points}
The density of integer lattice points that are visible from the origin is $\frac{6}{\pi^2}$, or approximately $60\%$.
\end{proposition}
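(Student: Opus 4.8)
The plan is to reduce the statement to a count of coprime pairs, exploit the fact that divisibility by distinct primes behaves independently, and then recognize the resulting infinite product as $1/\zeta(2)$ via Euler's product formula. By Proposition~\ref{prop:visibility_of_a_point}, a non-origin point $(x,y)$ is visible exactly when $\gcd(x,y)=1$, so everything comes down to estimating how many coprime pairs lie in a large box. Using the reflection symmetry of the lattice across the coordinate axes and the fact that the axes and origin contribute only $O(1)$ lattice points (for $\gcd(k,0)=|k|$, only $(\pm 1,0)$ and $(0,\pm 1)$ are visible on the axes), I would show that $\tfrac{V(n)}{T(n)}$ has the same limit as the first-quadrant ratio $R(n) := \tfrac{1}{n^2}\,\#\{(x,y): 1\le x,y\le n,\ \gcd(x,y)=1\}$, since $T(n)\sim n^2$ and each of the four open quadrants contributes an asymptotically equal share.

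Next I would carry out the main estimate. For each integer $d\ge 1$, the number of pairs $(x,y)\in\{1,\dots,n\}^2$ with $d\mid x$ and $d\mid y$ is exactly $\lfloor n/d\rfloor^2$, so $\lfloor n/d\rfloor^2 = n^2/d^2 + O(n/d)$. In the probabilistic language the paper prefers, the event ``$p$ divides both $x$ and $y$'' has probability $\tfrac{1}{p^2}+O(\tfrac1n)$, and for finitely many distinct primes these events are jointly independent up to a controlled error; hence the probability of avoiding all of them (i.e.\ of coprimality) is $\prod_p\bigl(1-\tfrac1{p^2}\bigr)$. Concretely I would fix a cutoff $N$, compare $R(n)$ with $\prod_{p\le N}\bigl(1-p^{-2}\bigr)$, bound the discrepancy by a tail term of size $C\sum_{d>N} d^{-2}$ coming from squarefree $d$ with a prime factor exceeding $N$, plus an error $C(N)/n$ from the floor functions, then let $n\to\infty$ and afterwards $N\to\infty$. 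This two-parameter limit interchange, and in particular showing the large-$d$ tail is genuinely negligible, is the one delicate point of the argument; the rest is bookkeeping.

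Finally I would identify the constant. Euler's product formula states $\sum_{k\ge 1} k^{-s} = \prod_p \bigl(1-p^{-s}\bigr)^{-1}$ for real $s>1$; taking reciprocals at $s=2$ gives $\prod_p\bigl(1-p^{-2}\bigr) = 1/\zeta(2)$, where $\zeta(2)=\sum_{k\ge 1}1/k^2$. Invoking (or briefly proving, e.g.\ via the Fourier-series solution of the Basel problem) the identity $\zeta(2)=\tfrac{\pi^2}{6}$ then yields that the density equals $\dfrac{1}{\pi^2/6}=\dfrac{6}{\pi^2}\approx 0.6079$, which is the desired $\approx 60\%$. Thus the only step requiring real care is the error analysis in the middle; the reduction to coprime pairs and the evaluation of the product are standard.
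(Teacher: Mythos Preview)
Your proposal is correct and follows essentially the same route as the paper: reduce visibility to coprimality via Proposition~\ref{prop:visibility_of_a_point}, argue that the density of coprime pairs equals $\prod_p(1-p^{-2})$, and identify this product as $1/\zeta(2)=6/\pi^2$ via Euler's product formula and the Basel problem. The only difference is that you supply a genuine error analysis (cutoff $N$, floor-function errors, tail bounds) for the independence step, whereas the paper simply asserts mutual independence of the events ``$p\mid x$ and $p\mid y$'' across primes and multiplies the probabilities directly; your version is thus a more rigorous rendering of the same argument.
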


\begin{proof}
It suffices to show that a lattice point chosen at random has a probability of $\frac{6}{\pi^2}$ of being visible from the origin.  Let $x$ and $y$ be randomly selected integers.  Recall that $(x,y)$ is visible if and only if $\gcd(x,y)=1$ by Proposition~\ref{prop:visibility_of_a_point}.  Hence it suffices to compute the probability that no prime divides both $x$ and $y$.  The probability that $x$ is divisible by the prime $p$ is $\frac{1}{p}$.  Similarly $y$ is divisible by $p$ with probability $\frac{1}{p}$.  By mutual independence, the probability that both $x$ and $y$ are divisible by $p$ is $\frac{1}{p^2}$.  Hence, the probability that both integers $x$ and $y$ are not divisible by $p$ is $1 - \frac{1}{p^2}$.  For distinct primes, these divisibility events are mutually independent, thus the probability that no prime divides both $x$ and $y$ is the following product over the primes:
$$\displaystyle\prod_p \left( 1 - \frac{1}{p^2} \right).$$
To calculate this infinite product, it is helpful to consider the Riemann zeta function
$$\zeta(s) = \sum_{n\geq 1} \frac{1}{n^s} = \frac{1}{1^s} + \frac{1}{2^s} + \frac{1}{3^s} + \cdots$$
for $s>1$.  A result of Euler connects this infinite sum with an infinite product of infinite sums over the primes.  The essence of Euler's proof is his use of the fundamental theorem of arithmetic to observe that the sum $\zeta(s)$ can be written as the following infinite product
\begin{align}
\sum_{n\geq 1} \frac{1}{n^s} = \prod_p \left( 1 + \frac{1}{p^{s}} + \frac{1}{p^{2s}} + \frac{1}{p^{3s}} + \cdots \right).\label{eqn:Euler_product}
\end{align}
To prove Equation~(\ref{eqn:Euler_product}), Euler observed that since each $n$ in the denominator on the left-hand side is of the form $n = p_{i_1}^{\alpha_{i_1}}p_{i_2}^{\alpha_{i_2}}\cdots p_{i_k}^{\alpha_{i_k}}$ for some $k$ by the fundamental theorem of arithmetic, then by multiplying out the product on the right-hand side, each term $\frac{1}{n^s}$ on the left-hand side appears exactly once, as a product of the appropriate powers of the primes in $n$.  And since each multiplicand on the right-hand side is a geometric series of the form $\dfrac{1}{1-\frac{1}{p^s}}$, Equation~(\ref{eqn:Euler_product}) becomes
$$\sum_{n\geq 1} \frac{1}{n^s} = \prod_p \dfrac{1}{1-\frac{1}{p^s}}.$$
Setting $s=2$ and taking reciprocals, we get
\begin{align*}
\zeta(2)^{-1} = \dfrac{1}{\sum_{n\geq 1} \frac{1}{n^2}} = \prod_p \left( 1 - \frac{1}{p^2} \right),
\end{align*}
where the right-hand side is the probability value we seek, and the left-hand side is the reciprocal of the well-known evaluation of the Riemann zeta function at $s=2$, namely $\zeta(2) = \frac{\pi^2}{6}$ (the solution to the famous \textit{Basel Problem}\footnote{The Basel Problem asks for the exact sum of the reciprocals of the squares of the positive integers.  There are a variety of proofs of this result.  Chapman in 2003 gives the details of 14 different proofs~\cite{Chap03}.  More recently in 2015, Moreno compiles a comprehensive list of 85 references from Euler to the present that address the Basel Problem~\cite{Moreno15}.}).  Hence the fraction of lattice points $(x,y)$ visible from the origin is $\frac{6}{\pi^2}$ (or approximately 60\%), as desired.
\end{proof}


\section{The traditional method to find hidden forests}\label{sec:the_CRT_algorithm}

In the previous section we showed that approximately $60\%$ of the integer lattice is visible, and hence approximately $40\%$ lies hidden from view.  In this section, we find arbitrarily large patches of hidden square regions in $\mathbb{Z}^2$ using the known method.  This technique is what we call the \texttt{CRT}-algorithm, since it is mainly an application of the Chinese Remainder Theorem (CRT).  The strategy is to find two sets of $n$ consecutive integers
$$\mathbf{X} = \{x_1, x_2, \ldots, x_n\} \mbox{ and } \mathbf{Y} = \{y_1, y_2, \ldots, y_n\}$$
such that $\mathbf{X} \cap \mathbf{Y} = \emptyset$ and $\gcd(x_i,y_j) > 1$ for all $1 \leq i,j \leq n$.  Then it is clear that the $n^2$ points in the set $\{(x_i, y_j) \; | \; 1 \leq i,j \leq n\}$ yield the desired hidden square region.  To this end, we first establish some necessary preliminary definitions.

\begin{definition}[Hidden forest]
An $n \times n$ \textit{hidden forest} in $\mathbb{Z}^2$ is a square patch of $n^2$ adjacent invisible integer lattice points.  We denote this hidden forest by the symbol $\Hforest{n}$ where $(x,y)$ is the closest corner lattice point of the square to the origin.  By the remark below, this closest corner point is well-defined.
\end{definition}

\begin{remark}\label{rem:no_axes}
Observe that the points $(x,\pm 1)$ and $(\pm 1,y)$ are visible for all $x,y \in \mathbb{Z}$ by Proposition~\ref{prop:visibility_of_a_point}.  Hence no nontrivial (that is, $n>1$) hidden forest $\Hforest{n}$ will contain any points on the $x$- or $y$-axes.  Hence we conclude that any $\Hforest{n}$ for $n>1$ is completely contained in the interior of one of the four quadrants.
\end{remark}

\begin{definition}[Prime matrix]\label{def:prime_matrix}
Let $\{p_1, p_2, \ldots, p_{n^2} \}$ be the set of the first $n^2$ primes.  Construct an $n \times n$ matrix with these primes by filling row $i$ with $p_{(i-1)n+1}$ through $p_{(i-1)n+n}$ for each $1 \leq i \leq n$ to yield the following:

$$\left(
\begin{array}{cccccc}
p_1 & p_2 & \cdots & p_j & \cdots & p_n \\
p_{n+1} & p_{n+2} & \cdots & p_{n+j} & \cdots & p_{2n} \\
p_{2n+1} & p_{2n+2} & \cdots & p_{2n+j} & \cdots & p_{3n}\\
\vdots & \vdots & & \vdots & & \vdots\\
p_{(i-1)n+1} & p_{(i-1)n+2} & \cdots & \framebox[1.1\width]{\mbox{$p_{(i-1)n+j}$}} & \cdots & p_{(i-1)n+n}\\
\vdots & \vdots & & \vdots & & \vdots\\
p_{(n-1)n+1} & p_{(n-1)n+2} & \cdots & p_{(n-1)n+j} & \cdots & p_{n^2}
\end{array} \right).
$$
Note that the prime $p_{(i-1)n+j}$, boxed for visual ease, is located in row $i$ and column $j$ of the matrix.  We call this $n \times n$ matrix a \textit{prime matrix} and denote it $P_n$.
\end{definition}

\subsection{The \texttt{CRT}-algorithm}\label{subsec:known_method}

The following theorem is the primary tool used in the \texttt{CRT}-algorithm to find hidden forests of arbitrary size.

\begin{theorem}\label{thm:2by2_case}
For each $n \in \mathbb{N}$, there exist two sets of $n$ consecutive numbers $\mathbf{X}=\{x_1, x_2, \ldots, x_n\}$ and $\mathbf{Y}=\{y_1, y_2, \ldots, y_n\}$ such that $\mathbf{X} \cap \mathbf{Y} = \emptyset$ and $\gcd(x_i,y_j) > 1$ for all $1 \leq i,j \leq n$.
\end{theorem}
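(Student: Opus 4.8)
The plan is to use the prime matrix $P_n$ of Definition~\ref{def:prime_matrix} as a bookkeeping device that records, for each pair $(i,j)$, a witness prime forcing $\gcd(x_i,y_j)>1$. Writing $P_n=(p_{(i-1)n+j})$, I would set $R_i:=\prod_{j=1}^{n}p_{(i-1)n+j}$ (the product of the entries in row $i$) for each $i$, and $C_j:=\prod_{i=1}^{n}p_{(i-1)n+j}$ (the product of the entries in column $j$) for each $j$. Since the $n^2$ primes $p_1,\dots,p_{n^2}$ are distinct and distinct rows (respectively, distinct columns) of $P_n$ involve disjoint sets of primes, the integers $R_1,\dots,R_n$ are pairwise coprime and likewise $C_1,\dots,C_n$ are pairwise coprime; moreover $R_1\cdots R_n=C_1\cdots C_n=\prod_{k=1}^{n^2}p_k$, the $n^2$-th primorial.

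Next I would force $\mathbf X$ and $\mathbf Y$ to be blocks of consecutive integers, $x_i=x_1+(i-1)$ and $y_j=y_1+(j-1)$, and pin down the starting points by the Chinese Remainder Theorem. For $x_1$ I would solve the system $x_1\equiv-(i-1)\pmod{R_i}$ for $i=1,\dots,n$, which is solvable because the $R_i$ are pairwise coprime, with solution set a full residue class modulo $\prod_{k=1}^{n^2}p_k$. Symmetrically, for $y_1$ I would solve $y_1\equiv-(j-1)\pmod{C_j}$ for $j=1,\dots,n$, again solvable since the $C_j$ are pairwise coprime, with solution set a full residue class modulo $\prod_{k=1}^{n^2}p_k$. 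By construction $R_i\mid x_i$ for every $i$ and $C_j\mid y_j$ for every $j$, so the entry $p_{(i-1)n+j}$, which divides both $R_i$ and $C_j$, divides both $x_i$ and $y_j$; hence $\gcd(x_i,y_j)\ge p_{(i-1)n+j}>1$ for all $1\le i,j\le n$.

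Finally I would secure the two remaining requirements — that the $x_i$ and $y_j$ be natural numbers and that $\mathbf X\cap\mathbf Y=\emptyset$ — using the slack in CRT. Since $x_1$ and $y_1$ may each be replaced by that value plus an arbitrary multiple of the primorial $\prod_{k=1}^{n^2}p_k$, I can first shift $x_1$ to be a large positive integer and then shift $y_1$ to exceed $x_1+n-1$; this preserves every congruence while placing both blocks in $\mathbb N$ and making the intervals $[x_1,x_1+n-1]$ and $[y_1,y_1+n-1]$ disjoint, so in particular $\mathbf X\cap\mathbf Y=\emptyset$.

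I do not expect a serious obstacle. The only points needing care are (a) the pairwise coprimality of the moduli, which is immediate from the distinctness of the entries of $P_n$ and the disjointness of its rows and of its columns, and (b) the observation that the single prime $p_{(i-1)n+j}$ sits simultaneously in row $i$ and column $j$, which is precisely what couples the congruence system for $x_1$ to the one for $y_1$. Everything else — solvability of the two CRT systems and the final shift to obtain positivity and disjointness — is routine.
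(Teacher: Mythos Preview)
Your proof is correct and follows essentially the same approach as the paper: build the prime matrix $P_n$, apply CRT to the row products for $\mathbf{X}$ and to the column products for $\mathbf{Y}$, and observe that the entry $p_{(i-1)n+j}$ witnesses $\gcd(x_i,y_j)>1$. The only minor difference is in securing $\mathbf{X}\cap\mathbf{Y}=\emptyset$: you shift using the CRT slack, whereas the paper notes that for $n\ge 2$ disjointness is automatic (a common element $a$ would have a neighbor $a\pm 1$ in the other set, and $\gcd(a,a\pm 1)=1$ contradicts the gcd condition), handling $n=1$ by hand.
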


\begin{proof}
Fix $n \in \mathbb{N}$.  Consider the prime matrix $P_n$.  Let $R_i$ and $C_j$ be the product of the entries in row $i$ and column $j$, respectively, so we have
$$R_i = \prod_{k=1}^n p_{(i-1)n+k} \;\;\;\; \mbox{and} \;\;\;\; C_j = \prod_{k=0}^{n-1} p_{kn+j}.$$
Since they share no primes in common, the row products $R_1, R_2, \ldots, R_n$ are pairwise relatively prime.  Similarly, the column products $C_1, C_2, \ldots, C_n$ are pairwise relatively prime.  Consider the following pair of systems of linear congruences:
$$
\begin{cases}
x+1 \equiv 0 \pmod{R_1}\\
x+2 \equiv 0 \pmod{R_2}\\
\hspace{.7in}\vdots & \\
x+n \equiv 0 \pmod{R_n}
\end{cases}
\hspace{.6in}
\begin{cases}
y+1 \equiv 0 \pmod{C_1}\\
y+2 \equiv 0 \pmod{C_2}\\
\hspace{.7in} \vdots & \\
y+n \equiv 0 \pmod{C_n}.
\end{cases}
$$
Observe that $R_1 \cdot R_2 \cdots R_n = C_1 \cdot C_2 \cdots C_n = \prod_{i=1}^{n^2} p_i$, which we denote $M$.  By the Chinese Remainder Theorem, there exist solutions $x_0$ and $y_0$ to the left and right systems, respectively, such that $x_0$ and $y_0$ are unique modulo $M$.   Define the set $\mathbf{X} = \{x_0 + 1, x_0 + 2, \ldots, x_0 + n \}$ and the set $\mathbf{Y} = \{y_0 + 1, y_0 + 2, \ldots, y_0 + n \}$.  We claim that none of the integers in $\mathbf{X}$ are pairwise relatively prime to any of the integers in $\mathbf{Y}$.  For an arbitrary $x_0 + i \in \mathbf{X}$ and $y_0 + j \in \mathbf{Y}$, these two elements by construction are multiples of $R_i$ and $C_j$, respectively. Hence the prime that lies in the intersection of row $i$ and column $j$ in the matrix, namely $p_{(i-1)n+j}$, divides $\gcd(x_0+i,y_0+j)$.  Thus $\gcd(x_0+i,y_0+j)>1$ as desired.

Observe that for $n \geq 2$ the sets $\mathbf{X}$ and $\mathbf{Y}$ are necessarily disjoint. Otherwise if $\mathbf{X} \cap \mathbf{Y} \neq \emptyset$ then some element $a \in \mathbf{X}$ is relatively prime to some element $a\pm 1 \in \mathbf{Y}$ since $\gcd(a, a\pm 1) = 1$, contradicting $\gcd(x_0+i,y_0+j)>1$ for all $1 \leq i,j \leq n$. For the trivial case when $n=1$, the algorithm above yields $\mathbf{X}=\mathbf{Y}=\{2\}$. So set $\mathbf{Y}=\{4\}$ and hence $\mathbf{X} \cap \mathbf{Y} = \emptyset$.
\end{proof}

\bigskip

\boitegrise{
\noindent\textbf{The \texttt{CRT}-algorithm to construct a hidden forest $\Hforest{n}$:}
\begin{enumerate}
\item Fix a value $n \in \mathbb{N}$.
\item Construct the prime matrix $P_n$.
\item Apply Theorem~\ref{thm:2by2_case} to $P_n$ to yield sets $\mathbf{X}$ and $\mathbf{Y}$.
\item Construct the hidden forest $\Hforest{n}$ from $\mathbf{X}$ and $\mathbf{Y}$.
\end{enumerate}\vspace{-.2in}}{0.65\textwidth}

\bigskip

\subsection{The gcd-grid and gcd-matrix yielded by prime matrices}\label{subsec:gcd-grid_and_matrix}

By Theorem~\ref{thm:2by2_case}, the prime matrix $P_n$ yields the hidden forest $\Hforest{n}$ comprised of the $n^2$ points $(x_i,y_j)$ where $x_i = x_0 + i \in \mathbf{X}$ and $y_j = y_0 + j \in \mathbf{Y}$ for $1 \leq i,j \leq n$. The forest $\Hforest{n}$ is shown in Figure~\ref{fig:generic_hidden_forest}.  For each $\Hforest{n}$ we can write a corresponding $n \times n$ array of numbers called the \textit{gcd-grid} where $g_{i,j} = \gcd(x_i,y_j)$ for $1 \leq i,j \leq n$.  The gcd-grid is shown in Figure~\ref{fig:generic_gcd_grid}.

\begin{figure}[H]
\begin{center}
\begin{subfigure}[h]{0.45\textwidth}
		\begin{tikzpicture}[scale=.8]
		\draw (0,0)--(0,2)--(2,2)--(2,0)--(0,0);
		\draw (0,1)--(2,1); \draw (1,0)--(1,2); \draw(0,3.5)--(2,3.5); \draw(0,5)--(2,5);
		\draw(3,0)--(3,2);
		\draw(4,0)--(4,2)--(5,2)--(5,0)--(4,0); \draw(4,1)--(5,1);
		\draw(4,3.5)--(5,3.5); \draw(4,5)--(5,5);
		\foreach \x in {0,...,5}
				\draw (\x,5)--(\x,4.75);
		\foreach \x in {0,...,5}
				\draw(\x,3.5)--(\x,3.75);
		\foreach \x in {0,...,5}
				\draw(\x,3.5)--(\x,3.25);
		\foreach \x in {0,...,5}
				\draw(\x,2)--(\x,2.25);
		\foreach \x in {0,...,5}
				\node at (\x,4.35) {$\vdots$};
		\foreach \x in {0,...,5}
				\node at (\x,2.85) {$\vdots$};
		\node at (2.5,3.5) {$\cdots$};
		\node at (3.5,3.5) {$\cdots$};
		\node at (2.5,0) {$\cdots$};
		\node at (3.5,0) {$\cdots$};
		\node at (2.5,1) {$\cdots$};
		\node at (3.5,1) {$\cdots$};
		\node at (2.5,2) {$\cdots$};
		\node at (3.5,2) {$\cdots$};
		\node at (2.5,5) {$\cdots$};
		\node at (3.5,5) {$\cdots$};
		\foreach \x in {0,...,5}
				\shade[ball color=blue] (\x,0) circle (.65ex);
		\foreach \x in {0,...,5}
				\shade[ball color=blue] (\x,1) circle (.65ex);
		\foreach \x in {0,...,5}
				\shade[ball color=blue] (\x,2) circle (.65ex);
		\foreach \x in {0,...,5}
				\shade[ball color=blue] (\x,3.5) circle (.65ex);
		\foreach \x in {0,...,5}
				\shade[ball color=blue] (\x,5) circle (.65ex);
		\shade[ball color=red] (3,3.5) circle (.65ex);
		\node [below left] at (0,0) {\footnotesize $(x_1,y_1)$};
		\node [left] at (0,1) {\footnotesize $(x_1,y_2)$};
		\node [left] at (0,3.5) {\footnotesize $(x_1,y_j)$};
		\node [above left] at (0,5) {\footnotesize $(x_1,y_n)$};
		\node [below] at (1,0) {\footnotesize $(x_2,y_1)$};
		\node [below] at (3,0) {\footnotesize $(x_i,y_1)$};
		\node [below right] at (5,0) {\footnotesize $(x_n,y_1)$};
		\node [above, red] at (3,3.5) {\footnotesize $(x_i,y_j)$};
		\node [above] at (3,5) {\footnotesize $(x_i,y_n)$};
		\node [right] at (5,3.5) {\footnotesize $(x_n,y_j)$};
		\node [above right] at (5,5) {\footnotesize $(x_n,y_n)$};
		\end{tikzpicture}
\caption{Hidden forest $\Hforest{n}$}
\label{fig:generic_hidden_forest}
\end{subfigure}
\begin{subfigure}[h]{0.45\textwidth}
\hspace{.5in}
		\begin{tikzpicture}[scale=.8]
		\draw (0,0)--(0,2)--(2,2)--(2,0)--(0,0);
		\draw (0,1)--(2,1); \draw (1,0)--(1,2); \draw(0,3.5)--(2,3.5); \draw(0,5)--(2,5);
		\draw(3,0)--(3,2);
		\draw(4,0)--(4,2)--(5,2)--(5,0)--(4,0); \draw(4,1)--(5,1);
		\draw(4,3.5)--(5,3.5); \draw(4,5)--(5,5);
		\foreach \x in {0,...,5}
				\draw (\x,5)--(\x,4.75);
		\foreach \x in {0,...,5}
				\draw(\x,3.5)--(\x,3.75);
		\foreach \x in {0,...,5}
				\draw(\x,3.5)--(\x,3.25);
		\foreach \x in {0,...,5}
				\draw(\x,2)--(\x,2.25);
		\foreach \x in {0,...,5}
				\node at (\x,4.35) {$\vdots$};
		\foreach \x in {0,...,5}
				\node at (\x,2.85) {$\vdots$};
		\node at (2.5,3.5) {$\cdots$};
		\node at (3.5,3.5) {$\cdots$};
		\node at (2.5,0) {$\cdots$};
		\node at (3.5,0) {$\cdots$};
		\node at (2.5,1) {$\cdots$};
		\node at (3.5,1) {$\cdots$};
		\node at (2.5,2) {$\cdots$};
		\node at (3.5,2) {$\cdots$};
		\node at (2.5,5) {$\cdots$};
		\node at (3.5,5) {$\cdots$};
		\foreach \x in {0,...,5}
				\shade[ball color=blue] (\x,0) circle (.65ex);
		\foreach \x in {0,...,5}
				\shade[ball color=blue] (\x,1) circle (.65ex);
		\foreach \x in {0,...,5}
				\shade[ball color=blue] (\x,2) circle (.65ex);
		\foreach \x in {0,...,5}
				\shade[ball color=blue] (\x,3.5) circle (.65ex);
		\foreach \x in {0,...,5}
				\shade[ball color=blue] (\x,5) circle (.65ex);
		\shade[ball color=red] (3,3.5) circle (.65ex);
		\node [below left] at (0,0) {\footnotesize $g_{1,1}$};
		\node [left] at (0,1) {\footnotesize $g_{1,2}$};
		\node [left] at (0,3.5) {\footnotesize $g_{1,j}$};
		\node [above left] at (0,5) {\footnotesize $g_{1,n}$};
		\node [below] at (1,0) {\footnotesize $g_{2,1}$};
		\node [below] at (3,0) {\footnotesize $g_{i,1}$};
		\node [below right] at (5,0) {\footnotesize $g_{n,1}$};
		\node [above, red] at (3,3.5) {\footnotesize $g_{i,j}$};
		\node [above] at (3,5) {\footnotesize $g_{i,n}$};
		\node [right] at (5,3.5) {\footnotesize $g_{n,j}$};
		\node [above right] at (5,5) {\footnotesize $g_{n,n}$};
		\end{tikzpicture}
\caption{The $\gcd$-grid of $\Hforest{n}$}
\label{fig:generic_gcd_grid}
\end{subfigure}
\end{center}
\vspace{-.15in}
\caption{}
\end{figure}

We may also consider the $\gcd$-grid as a matrix if we collapse the grid structure and place the $n^2$ $\gcd$-values into a matrix in the same locations that they appear in the $\gcd$-grid as follows:
$$\mathrm{Gcd}_{P_n} = \left(
\begin{array}{cccccc}
g_{1,n} & g_{2,n} & \cdots & g_{i,n} & \cdots & g_{n,n} \\
\vdots & \vdots &  & \vdots & & \vdots\\
g_{1,j} & g_{2,j} & \cdots & g_{i,j} & \cdots & g_{n,j}\\
\vdots & \vdots &  & \vdots & & \vdots\\
g_{1,2} & g_{2,2} & \cdots & g_{i,2} & \cdots & g_{n,2}\\
g_{1,1} & g_{2,1} & \cdots & g_{i,1} & \cdots & g_{n,1} 
\end{array}
\right).
$$
We call this matrix arising from the $\gcd$-grid the \textit{gcd-matrix} corresponding to $P_n$ and denote it $\mathrm{Gcd}_{P_n}$.  If we denote the prime $p_{(i-1)n+j}$ in row $i$ and column $j$ of matrix $P_n$ as $p_{i,j}$, then the prime matrix given in Definition~\ref{def:prime_matrix} can be written as follows:
$$P_n = \left(
\begin{array}{cccccc}
p_{1,1} & p_{1,2} & \cdots & p_{1,j} & \cdots & p_{1,n} \\
p_{2,1} & p_{2,2} & \cdots & p_{2,j} & \cdots & p_{2,n} \\
\vdots & \vdots &  & \vdots & & \vdots\\
p_{i,1} & p_{i,2} & \cdots & p_{i,j} & \cdots & p_{i,n}\\
\vdots & \vdots &  & \vdots & & \vdots\\
p_{n,1} & p_{n,2} & \cdots & p_{n,j} & \cdots & p_{n,n} 
\end{array}
\right).
$$

\boitegrise{
\begin{remark} The $(i,j)$-entry of $P_n$ is $p_{i,j}$.  However, the $(i,j)$-entry of $\mathrm{Gcd}_{P_n}$ is not $g_{i,j}$.  In fact, the entry $g_{i,j}$ is in row $n-(j-1)$ and column $i$ of $\mathrm{Gcd}_{P_n}$.\vspace{-.3in}\end{remark}}{0.85\textwidth}

Comparing the locations of the entries $g_{i,j}$ and $p_{i,j}$ of the matrices $\mathrm{Gcd}_{P_n}$ and $P_n$, respectively, as the values $i$ and $j$ vary, we observe that the subscripts of the entries in one matrix are a rotation of the subscripts of the entries in the other.  In particular, the following proposition describes the relationship between the matrices $\mathrm{Gcd}_{P_n}$ and $P_n$ via a third matrix which we call $\widetilde{\mathrm{Gcd}}_{P_n}$.

\begin{proposition}\label{prop:correspondence_betw_P_and_GcdP}
Let $P_n$ be a prime matrix.  A rotation by $90^\circ$ counter-clockwise of the entries in $P_n$ gives a corresponding matrix which we denote by $\widetilde{\mathrm{Gcd}}_{P_n}$, and the $(i,j)$-entry in $\widetilde{\mathrm{Gcd}}_{P_n}$ divides the $(i,j)$-entry in the $\gcd$-matrix $\mathrm{Gcd}_{P_n}$.

\end{proposition}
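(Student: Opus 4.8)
The plan is to reduce the statement to a short index-bookkeeping computation together with the divisibility fact that is already buried inside the proof of Theorem~\ref{thm:2by2_case}. First I would isolate that fact in usable form. With $x_0,y_0$ the CRT solutions produced there and $x_i=x_0+i$, $y_j=y_0+j$, the construction forces $R_i\mid x_i$ and $C_j\mid y_j$, so the prime $p_{i,j}=p_{(i-1)n+j}$ occupying row $i$, column $j$ of $P_n$ divides $\gcd(x_i,y_j)=g_{i,j}$. Hence
$$p_{i,j}\mid g_{i,j}\qquad\text{for all }1\le i,j\le n.$$

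Next I would fix the two indexing conventions and write each matrix entrywise. Writing $(A)_{a,b}$ for the $(a,b)$-entry of $A$, the definition of $\mathrm{Gcd}_{P_n}$ — whose top row is $g_{1,n},\dots,g_{n,n}$ and whose bottom row is $g_{1,1},\dots,g_{n,1}$ — gives $(\mathrm{Gcd}_{P_n})_{a,b}=g_{b,\,n+1-a}$; equivalently, $g_{i,j}$ sits in row $n-(j-1)$ and column $i$, as already noted in the boxed remark. On the other side, a $90^\circ$ counter-clockwise rotation of an $n\times n$ matrix carries the entry in row $r$, column $c$ to row $n+1-c$, column $r$, so
$$(\widetilde{\mathrm{Gcd}}_{P_n})_{a,b}=(P_n)_{b,\,n+1-a}=p_{b,\,n+1-a}.$$
I would double-check this rotation formula on a $2\times 2$ (or $3\times 3$) block, since an off-by-one shift or a clockwise/counter-clockwise mix-up is essentially the only way the argument can fail.

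Finally I would combine the two entrywise descriptions: for every $1\le a,b\le n$,
$$(\widetilde{\mathrm{Gcd}}_{P_n})_{a,b}=p_{b,\,n+1-a}\ \mid\ g_{b,\,n+1-a}=(\mathrm{Gcd}_{P_n})_{a,b},$$
where the middle divisibility is the boxed fact applied with $(i,j)=(b,\,n+1-a)$. As $a,b$ were arbitrary, every entry of $\widetilde{\mathrm{Gcd}}_{P_n}$ divides the corresponding entry of $\mathrm{Gcd}_{P_n}$, which is the assertion. The main obstacle is purely clerical: transcribing the rotation convention and the stacking convention for $\mathrm{Gcd}_{P_n}$ with consistent orientation so that the two entrywise formulas genuinely refer to the same pair of subscripts; once that is in place the divisibility is immediate from Theorem~\ref{thm:2by2_case}.
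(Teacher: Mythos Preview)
Your proposal is correct and follows essentially the same approach as the paper: both arguments reduce to index bookkeeping that places $p_{i,j}$ and $g_{i,j}$ at the same position after rotation, and then invoke the divisibility $p_{i,j}\mid g_{i,j}$ established in the proof of Theorem~\ref{thm:2by2_case}. The only cosmetic difference is that the paper packages the rotation as the matrix identity $\widetilde{\mathrm{Gcd}}_{P_n}=(P_n\cdot \mathrm{AD}_n)^T$, whereas you compute the entrywise formula $(\widetilde{\mathrm{Gcd}}_{P_n})_{a,b}=p_{b,\,n+1-a}$ directly; the content is the same.
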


\begin{proof}
The rotational relationship between $P_n$ and $\widetilde{\mathrm{Gcd}}_{P_n}$ is given by a simple matrix calculation.  If we let $\mathrm{AD}_n$ be the anti-diagonal matrix---that is, a matrix with ones in the anti-diagonal and zeroes elsewhere, then $\widetilde{\mathrm{Gcd}}_{P_n} = (P_n\cdot\mathrm{AD}_n)^T$, where $T$ denotes the transpose of a matrix.  In particular, multiplying $P_n$ on the right by $\mathrm{AD}_n$ reverses the columns of $P_n$, and then transposing this result yields $\widetilde{\mathrm{Gcd}}_{P_n}$ as desired.  After this rotation on $P_n$ is performed, the entry $p_{i,j}$ of $\widetilde{\mathrm{Gcd}}_{P_n}$ is then located in row $n-(j-1)$ and column $i$.  In this same location in $\mathrm{Gcd}_{P_n}$ is $g_{i,j}$. Below we give an illustration of this process.
$$P_n = \left(
\begin{array}{cccccc}
p_{1,1} & p_{1,2} & \cdots & p_{1,j} & \cdots & p_{1,n} \\
p_{2,1} & p_{2,2} & \cdots & p_{2,j} & \cdots & p_{2,n} \\
\vdots & \vdots &  & \vdots & & \vdots\\
p_{i,1} & p_{i,2} & \cdots & p_{i,j} & \cdots & p_{i,n}\\
\vdots & \vdots &  & \vdots & & \vdots\\
p_{n,1} & p_{n,2} & \cdots & p_{n,j} & \cdots & p_{n,n} 
\end{array}
\right)
\;\; \xmapsto[90^\circ \; \mathrm{left}]{\RotateLeft} \;\;
\widetilde{\mathrm{Gcd}}_{P_n} = \left(
\begin{array}{cccccc}
p_{1,n} & p_{2,n} & \cdots & p_{i,n} & \cdots & p_{n,n} \\
\vdots & \vdots &  & \vdots & & \vdots\\
p_{1,j} & p_{2,j} & \cdots & p_{i,j} & \cdots & p_{n,j}\\
\vdots & \vdots &  & \vdots & & \vdots\\
p_{1,2} & p_{2,2} & \cdots & p_{i,2} & \cdots & p_{n,2}\\
p_{1,1} & p_{2,1} & \cdots & p_{i,1} & \cdots & p_{n,1}
\end{array}
\right).
$$

In the proof of Theorem~\ref{thm:2by2_case}, we observed that by construction the prime $p_{i,j}$ divides the value $\gcd(x_0 + i, y_0 + j) = g_{i,j}$.  Hence, the $(i,j)$-entry in $\widetilde{\mathrm{Gcd}}_{P_n}$ divides the $(i,j)$-entry in $\mathrm{Gcd}_{P_n}$.
\end{proof}

\begin{remark}
The rotational relationship between $P_n$ and $\widetilde{\mathrm{Gcd}}_{P_n}$ proves to be very important in Section~\ref{sec:quasiprime} when we perform the reverse rotation.  Starting from a $\gcd$-matrix, a \textit{clockwise} rotation will help us produce a \textit{quasiprime} matrix, crucial for finding closer hidden forests.
\end{remark}

\subsection{An application: the \texorpdfstring{$n=2,3,4$}{n=2,3,4} cases}\label{subsec:applications_of_known_method}

\begin{example}\label{exam:2by2_hidden_forest}
In the $2 \times 2$ case, using Theorem~\ref{thm:2by2_case}, we set $n=2$ and the prime matrix is
$$P_2 = \left(
\begin{array}{cc}
2 & 3 \\
5 & 7 \end{array}
\right).$$
The row products are $R_1 = 6$ and $R_2 = 35$, while the column products are $C_1 = 10$ and $C_2 = 21$.  Hence the corresponding linear congruences we need to solve are
$$\begin{array}{lcl}
x+1 \equiv 0 \pmod{6} & \hspace{1in} & y+1 \equiv 0 \pmod{10}\\
x+2 \equiv 0 \pmod{35} &  \hspace{1in} & y+2 \equiv 0 \pmod{21}.
\end{array}$$
By the \texttt{CRT}-algorithm, the left and right systems have the unique solutions $x_0 = 173 \pmod{210}$ and $y_0 = 19 \pmod{210}$, respectively.  Set $\mathbf{X} = \{174,175\}$ and $\mathbf{Y} = \{20,21\}$.  Then $\mathbf{X} \cap \mathbf{Y} = \emptyset$ and $\gcd(x_i,y_j)>1$ for all $1 \leq i,j \leq 2$.  Thus there is a hidden forest $H_{(174,20)}^2$ of four trees at $(174,20)$, $(174,21)$, $(175,20)$, and $(175,21)$.  In the figure below we give the hidden forest on the left and its corresponding $\gcd$-grid on the right.
\begin{center}
\begin{tikzpicture}[scale=.75]
\draw (0,0)--(0,1)--(1,1)--(1,0)--(0,0);
\foreach \x in {0,1}
		\shade[ball color=blue] (\x,0) circle (.75ex);
\foreach \x in {0,1}
		\shade[ball color=blue] (\x,1) circle (.75ex);
\node [left] at (0,0) {\footnotesize (174,20)};
\node [left] at (0,1) {\footnotesize (174,21)};
\node [right] at (1,0) {\footnotesize (175,20)};
\node [right] at (1,1) {\footnotesize (175,21)};
\end{tikzpicture}
\hspace{1in}
\begin{tikzpicture}[scale=.75]
\draw (0,0)--(0,1)--(1,1)--(1,0)--(0,0);
\foreach \x in {0,1}
		\shade[ball color=blue] (\x,0) circle (.75ex);
\foreach \x in {0,1}
		\shade[ball color=blue] (\x,1) circle (.75ex);
\node [left] at (0,0) {\footnotesize $\gcd(174,20)=2$};
\node [left] at (0,1) {\footnotesize $\gcd(174,21)=3$};
\node [right] at (1,0) {\footnotesize $5=\gcd(175,20)$};
\node [right] at (1,1) {\footnotesize $7=\gcd(175,21)$};
\end{tikzpicture}
\end{center}
Then by Proposition~\ref{prop:correspondence_betw_P_and_GcdP}, we have the following map from $P_2$ to $\widetilde{\mathrm{Gcd}}_{P_2}$:
$$P_2 = \left(
\begin{array}{cc}
2 & 3 \\
5 & 7 \end{array}
\right)
\;\; \xmapsto[90^\circ \; \mathrm{left}]{\RotateLeft} \;\;
\widetilde{\mathrm{Gcd}}_{P_2} = \left(
\begin{array}{cc}
3 & 7 \\
2 & 5 \end{array}
\right).
$$
In this example, the $\widetilde{\mathrm{Gcd}}_{P_2}$ coincides with the $\gcd$-matrix $\mathrm{Gcd}_{P_2}$, and so the $(i,j)$-entry of $\widetilde{\mathrm{Gcd}}_{P_2}$ divides the $(i,j)$-entry of $\mathrm{Gcd}_{P_2}$ as Proposition~\ref{prop:correspondence_betw_P_and_GcdP} guarantees.  This is an effect of the so-called \textit{law of small numbers}, since we see in the larger $n$ cases to follow that this coincidence does not occur.
\end{example}

\begin{example}\label{exam:3by3_hidden_forest}
In the $3 \times 3$ case, using Theorem~\ref{thm:2by2_case}, we set $n=3$ and the prime matrix is
$$P_3 = \left(
\begin{array}{ccc}
2 & 3 & 5\\
7 & 11 & 13\\
17 & 19 & 23
\end{array}
\right).$$
The \texttt{CRT}-algorithm gives the solutions $x_0 = \numprint{119 740 619}$ and $y_0 = \numprint{121 379 047}$, both unique modulo $\numprint{223 092 870}$.  Hence the nine coordinates $(x_i,y_j)$ of $H_{(119 740 620,121 379 048)}^3$ have the following values and respective prime factorizations:
\begin{align*}
x_1 &= \numprint{119 740 620} = 2^2 \cdot 3 \cdot 5 \cdot \numprint{1 995 677} &\hspace{.2in} y_1 &= \numprint{121 379 048} = 2^3 \cdot 7 \cdot 17 \cdot 59 \cdot 2161\\
x_2 &= \numprint{119 740 621} = 7 \cdot 11 \cdot 13 \cdot 37 \cdot 53 \cdot 61 & \hspace{.2in} y_2 &= \numprint{121 379 049} = 3^2 \cdot 11 \cdot 19 \cdot 173 \cdot 373\\
x_3 &= \numprint{119 740 622} = 2 \cdot 17 \cdot 19 \cdot 23 \cdot 8059 &\hspace{.2in} y_3 &= \numprint{121 379 050} = 2 \cdot 5^2 \cdot 13 \cdot 23^2 \cdot 353.
\end{align*}
It is readily verified that the corresponding $3 \times 3$ hidden forest has the following $\gcd$-grid:
\begin{center}
\begin{tikzpicture}[scale=.75]
\draw (0,0)--(0,2)--(2,2)--(2,0)--(0,0);
\draw (0,1)--(2,1);
\draw (1,0)--(1,2);
\foreach \x in {0,...,2}
		\shade[ball color=blue] (\x,0) circle (.75ex);
\foreach \x in {0,...,2}
		\shade[ball color=blue] (\x,1) circle (.75ex);
\foreach \x in {0,...,2}
		\shade[ball color=blue] (\x,2) circle (.75ex);
\node [left] at (0,0) {\footnotesize $2^2$};
\node [left] at (0,1) {\footnotesize 3};
\node [left] at (0,2) {\footnotesize $2 \! \cdot \! 5$};
\node [below] at (1,0) {\footnotesize 7};
\node [below] at (1,1) {\footnotesize 11};
\node [above] at (1,2) {\footnotesize 13};
\node [right] at (2,0) {\footnotesize $2 \! \cdot \! 17$};
\node [right] at (2,1) {\footnotesize 19};
\node [right] at (2,2) {\footnotesize $2 \! \cdot \! 23$};
\end{tikzpicture}
\end{center}
For example, the top-right node corresponds to the $(x_3,y_3)$-coordinate, and it is labeled by the value $\gcd(x_3,y_3)=2 \cdot 23$ since $x_3 = 2 \cdot 17 \cdot 19 \cdot 23 \cdot 8059$ and $y_3 = 2 \cdot 5^2 \cdot 13 \cdot 23^2 \cdot 353$.  And by Proposition~\ref{prop:correspondence_betw_P_and_GcdP}, we have the following map from $P_3$ to $\widetilde{\mathrm{Gcd}}_{P_3}$:
$$P_3 = \left(
\begin{array}{ccc}
2 & 3 & 5 \\
7 & 11 & 13 \\
17 & 19 & 23
\end{array}
\right)
\;\; \xmapsto[90^\circ \; \mathrm{left}]{\RotateLeft} \;\;
\widetilde{\mathrm{Gcd}}_{P_3} = \left(
\begin{array}{ccc}
5 & 13 & 23 \\
3 & 11 & 19 \\
2 & 7 & 17
\end{array}
\right).
$$
Observe that, as expected, the $(i,j)$-entry of $\widetilde{\mathrm{Gcd}}_P$ divides the $(i,j)$-entry of the $\gcd$-matrix
$$
\mathrm{Gcd}_{P_3} = \left(
\begin{array}{ccc}
2 \! \cdot \! 5 & 13 & 2 \! \cdot \! 23 \\
3 & 11 & 19 \\
2^2 & 7 & 2 \! \cdot \! 17
\end{array}
\right).
$$
\end{example}

\begin{example}\label{exam:4by4_hidden_forest}
In the $4 \times 4$ case, the \texttt{CRT}-algorithm on the prime matrix gives the following solution:
\begin{align*}
x_0 &= \numprint{2 847 617 195 518 191 809}\\
y_0 &= \numprint{1 160 906 121 308 397 397}.
\end{align*}
\end{example}

The absurdly large solution values in Examples~\ref{exam:3by3_hidden_forest} and \ref{exam:4by4_hidden_forest} reveal that the \texttt{CRT}-algorithm applied to prime matrices is hardly useful for finding $n \times n$ hidden forests which are close to the origin for cases even as small as $n=3$ and $n=4$.  For instance, we prove later that the closest $\Hforest{3}$ is at $x=1274$ and $y=1308$.  Furthermore, we reveal that there is an $\Hforest{4}$ at $x=\numprint{134 043}$ and $y=\numprint{184 785 885}$.  The $x$-value of the $\Hforest{4}$ which the \texttt{CRT}-algorithm on a prime matrix yields is $2.12441 \times 10^{13}$ times larger than this $x$-value, \numprint{134 043}, of the $\Hforest{4}$ which we found.

\begin{remark}\label{rem:Project_Euler} It turns out that the number \numprint{134 043} is a very interesting integer; it is the smallest positive integer $n$ such that the numbers in the set $\{n, n+1, n+2, n+3\}$ have exactly four prime factors each.\footnote{This is known as Problem 47 on the website \url{https://projecteuler.net/about} started in 2001 by Colin Hughes~\cite{Somers}.  Project Euler gives a series of challenging computational problems that require more than just mathematical insights to solve. Nayuki Minase's very simple solution via \texttt{Mathematica} to Problem 47 is given in Listing~\ref{mathematica_code}.} We later use this value to calculate the closest known $\Hforest{4}$, bearing the smallest $x$-value, in Example~\ref{exam:4by4}.
\end{remark}


\section{New methods to find closer hidden forests}\label{sec:quasiprime}

The previous section detailed the well-known method of using the \texttt{CRT}-algorithm on prime matrices to find arbitrarily large hidden forests.  The main problem with that method is that for $n \geq 3$, the locations of these $\Hforest{n}$ are substantially further away from the origin and thus progressively harder to compute.  The aim of this section is to introduce two concepts, namely quasiprime matrices and strings of strongly composite integers, to help find substantially closer $\Hforest{n}$.  In this section we give the closest $\Hforest{n}$ for $n=2,3$ and the closest known (to this date) $\Hforest{n}$ for $n=4$.

Recall in Remark~\ref{rem:no_axes}, we observed that $\Hforest{n}$ can never contain points on the $x$- or $y$-axes when $n>1$, and hence each $\Hforest{n}$ lies completely within the interior of one of the four quadrants. Thus the closest corner point $(x,y)$ of $\Hforest{n}$ is well-defined up to quadrant selection.

\begin{definition}[The closest $n \times n$]\label{def:hiddenforest_distance}
A hidden forest $\Hforest{n}$ is said to have distance $d$ from the origin where $d$ is given by $d(x,y) = \sqrt{x^2 + y^2}$.  We say that $\Hforest{n}$ is the \textit{closest $n\times n$ hidden forest} if it has the minimum distance $d$ of all hidden $n\times n$ forests.
\end{definition}


\boitegrise{
\begin{convention}
In searching for the closest hidden forest it suffices to search only half of Quadrant~I.  Observe that any $\Hforest{n}$, whose lower-left corner lies in Quadrant~I and above the line $y=x$, will have seven other copies up to reflectional symmetries about the lines $y=x$, $y=-x$, the $x$-axis, and the $y$-axis (see Figure~\ref{fig:convention}).  Moreover, these seven copies are the same distance from the origin as $\Hforest{n}$ is.  So we focus only on $H^n_{(a,b)}$ in Quadrant~I such that $(a,b)$ lies above the diagonal $y=x$ (that is, $a < b$).  Note that $H^n_{(a,a)}$ can never exist if $n>1$ since $\gcd(a,a+1)=1$ and hence $(a,a+1)$ is a visible point.\vspace{-.3in}
\end{convention}}{0.9\textwidth}

\begin{figure}[H]
\begin{center}
\begin{tikzpicture}[scale=.2]
\draw [<->, ultra thick, blue] (22,0) -- (0,0) -- (0,22);
\draw [<->, ultra thick, blue] (-22,0) -- (0,0) -- (0,-22);
\node at (0,0) {\includegraphics[width=8ex]{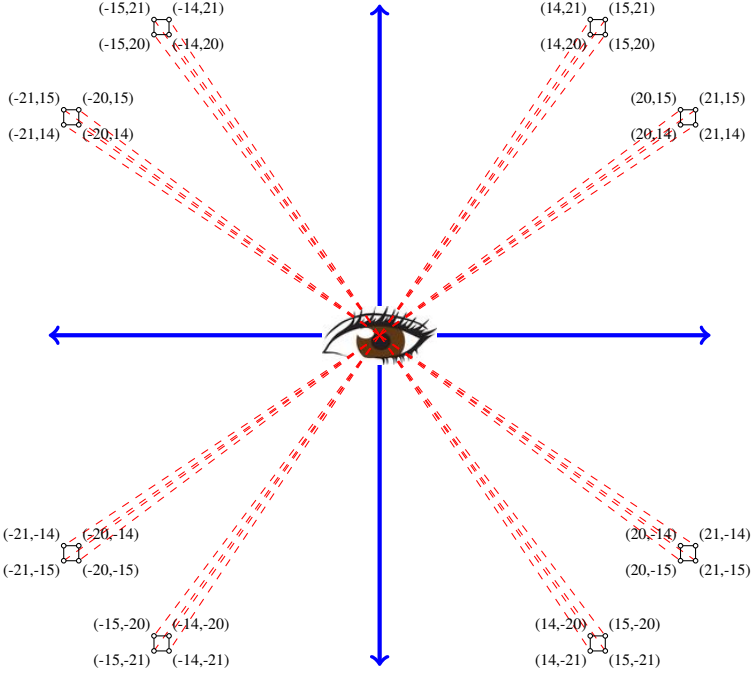}};
\draw[dashed,red] (0,0) -- (20,14);
\draw[dashed,red] (0,0) -- (21,14);
\draw[dashed,red] (0,0) -- (20,15);
\draw[dashed,red] (0,0) -- (21,15);
\draw[thin] (20,14) rectangle (21,15);
\node [below left] at (20.5,14.5) {\tiny(20,14)};
\node [below right] at (20.5,14.5) {\tiny(21,14)};
\node [above left] at (20.5,14.5) {\tiny(20,15)};
\node [above right] at (20.5,14.5) {\tiny(21,15)};
\draw plot[only marks, mark=*, mark options={fill=white}, mark size=.8ex] coordinates {(20,14) (21,14) (20,15) (21,15)};
\draw[dashed,red] (0,0) -- (14,20);
\draw[dashed,red] (0,0) -- (15,20);
\draw[dashed,red] (0,0) -- (14,21);
\draw[dashed,red] (0,0) -- (15,21);
\draw[thin] (14,20) rectangle (15,21);
\node [below left] at (14.5,20.5) {\tiny(14,20)};
\node [below right] at (14.5,20.5) {\tiny(15,20)};
\node [above left] at (14.5,20.5) {\tiny(14,21)};
\node [above right] at (14.5,20.5) {\tiny(15,21)};
\draw plot[only marks, mark=*, mark options={fill=white}, mark size=.8ex] coordinates {(14,20) (15,20) (14,21) (15,21)};
\draw[dashed,red] (0,0) -- (20,-14);
\draw[dashed,red] (0,0) -- (21,-14);
\draw[dashed,red] (0,0) -- (20,-15);
\draw[dashed,red] (0,0) -- (21,-15);
\draw (20,-14) rectangle (21,-15);
\node [below left] at (20.5,-14.5) {\tiny(20,-15)};
\node [below right] at (20.5,-14.5) {\tiny(21,-15)};
\node [above left] at (20.5,-14.5) {\tiny(20,-14)};
\node [above right] at (20.5,-14.5) {\tiny(21,-14)};
\draw plot[only marks, mark=*, mark options={fill=white}, mark size=.8ex] coordinates {(20,-14) (21,-14) (20,-15) (21,-15)};
\draw[dashed,red] (0,0) -- (14,-20);
\draw[dashed,red] (0,0) -- (15,-20);
\draw[dashed,red] (0,0) -- (14,-21);
\draw[dashed,red] (0,0) -- (15,-21);
\draw (14,-20) rectangle (15,-21);
\node [below left] at (14.5,-20.5) {\tiny(14,-21)};
\node [below right] at (14.5,-20.5) {\tiny(15,-21)};
\node [above left] at (14.5,-20.5) {\tiny(14,-20)};
\node [above right] at (14.5,-20.5) {\tiny(15,-20)};
\draw plot[only marks, mark=*, mark options={fill=white}, mark size=.8ex] coordinates {(14,-20) (15,-20) (14,-21) (15,-21)};
\draw[dashed,red] (0,0) -- (-20,14);
\draw[dashed,red] (0,0) -- (-21,14);
\draw[dashed,red] (0,0) -- (-20,15);
\draw[dashed,red] (0,0) -- (-21,15);
\draw[thin] (-20,14) rectangle (-21,15);
\node [below left] at (-20.5,14.5) {\tiny(-21,14)};
\node [below right] at (-20.5,14.5) {\tiny(-20,14)};
\node [above left] at (-20.5,14.5) {\tiny(-21,15)};
\node [above right] at (-20.5,14.5) {\tiny(-20,15)};
\draw plot[only marks, mark=*, mark options={fill=white}, mark size=.8ex] coordinates {(-20,14) (-21,14) (-20,15) (-21,15)};
\draw[dashed,red] (0,0) -- (-14,20);
\draw[dashed,red] (0,0) -- (-15,20);
\draw[dashed,red] (0,0) -- (-14,21);
\draw[dashed,red] (0,0) -- (-15,21);
\draw[thin] (-14,20) rectangle (-15,21);
\node [below left] at (-14.5,20.5) {\tiny(-15,20)};
\node [below right] at (-14.5,20.5) {\tiny(-14,20)};
\node [above left] at (-14.5,20.5) {\tiny(-15,21)};
\node [above right] at (-14.5,20.5) {\tiny(-14,21)};
\draw plot[only marks, mark=*, mark options={fill=white}, mark size=.8ex] coordinates {(-14,20) (-15,20) (-14,21) (-15,21)};
\draw[dashed,red] (0,0) -- (-20,-14);
\draw[dashed,red] (0,0) -- (-21,-14);
\draw[dashed,red] (0,0) -- (-20,-15);
\draw[dashed,red] (0,0) -- (-21,-15);
\draw (-20,-14) rectangle (-21,-15);
\node [below left] at (-20.5,-14.5) {\tiny(-21,-15)};
\node [below right] at (-20.5,-14.5) {\tiny(-20,-15)};
\node [above left] at (-20.5,-14.5) {\tiny(-21,-14)};
\node [above right] at (-20.5,-14.5) {\tiny(-20,-14)};
\draw plot[only marks, mark=*, mark options={fill=white}, mark size=.8ex] coordinates {(-20,-14) (-21,-14) (-20,-15) (-21,-15)};
\draw[dashed,red] (0,0) -- (-14,-20);
\draw[dashed,red] (0,0) -- (-15,-20);
\draw[dashed,red] (0,0) -- (-14,-21);
\draw[dashed,red] (0,0) -- (-15,-21);
\draw (-14,-20) rectangle (-15,-21);
\node [below left] at (-14.5,-20.5) {\tiny(-15,-21)};
\node [below right] at (-14.5,-20.5) {\tiny(-14,-21)};
\node [above left] at (-14.5,-20.5) {\tiny(-15,-20)};
\node [above right] at (-14.5,-20.5) {\tiny(-14,-20)};
\draw plot[only marks, mark=*, mark options={fill=white}, mark size=.8ex] coordinates {(-14,-20) (-15,-20) (-14,-21) (-15,-21)};
\end{tikzpicture}
\caption{Eight copies of the closest $2 \times 2$ hidden forest}
\label{fig:convention}
\end{center}
\end{figure}

\subsection{Quasiprime matrices and the \texttt{QP}-algorithm}\label{subsec:quasi}

In Proposition~\ref{prop:correspondence_betw_P_and_GcdP}, we begin with a prime matrix $P_n$ and observe that a $90^{\circ}$ counter-clockwise rotation of $P_n$ yields $\widetilde{\mathrm{Gcd}}_{P_n}$ which relates very closely to the $\gcd$-matrix $\mathrm{Gcd}_{P_n}$ of the corresponding $\Hforest{n}$ (in particular, recall that the $(i,j)$-entry of $\widetilde{\mathrm{Gcd}}_{P_n}$ divides the $(i,j)$-entry of $\mathrm{Gcd}_{P_n}$).  Now suppose instead that we start with an $\Hforest{n}$ and its associated $\gcd$-matrix, which we will denote $\mathrm{Gcd}_M$.  If we rotate this matrix $90^{\circ}$ clockwise, then we get some matrix $M$ that is not necessarily a prime matrix.  Furthermore, applying the \texttt{CRT}-algorithm on $M$ may not even be possible (see Example~\ref{exam:3by3}).  But from $M$, can we find a matrix $\widetilde{M}$ such that the $(i,j)$-entry of $\widetilde{M}$ divides the $(i,j)$-entry of $M$ and applying the \texttt{CRT}-algorithm on $\widetilde{M}$ gives the original $\Hforest{n}$ from which we started?  Based on much computational evidence, the answer appears to be yes.  The matrix $\widetilde{M}$ is what we call a quasiprime matrix $\QP$ to be defined in Definition \ref{def:quasiprime}, but a formal proof still awaits. For now, we proceed to give very substantial support that this conjecture holds for all $\Hforest{n}$ (see Question~\ref{question:every_hidden_forest_can_be_found_via_QP_matrix}).

To find $M$, we use the matrix equality in Proposition~\ref{prop:correspondence_betw_P_and_GcdP} and solve for $M$ as follows:
\begin{align}
\mathrm{Gcd}_M = (M \cdot AD_n)^T &\Longrightarrow (\mathrm{Gcd}_M)^T = M \cdot AD_n \notag \\
		&\Longrightarrow M = (\mathrm{Gcd}_M)^T \cdot AD_n, \label{eqn:QP_converse}
\end{align}
where Equation~(\ref{eqn:QP_converse}) follows since an anti-diagonal matrix with all ones in its nonzero entries is its own inverse.  Below we give an illustration of this process.
$$\mathrm{Gcd}_M = \left(
\begin{array}{cccccc}
g_{1,n} & g_{2,n} & \cdots & g_{i,n} & \cdots & g_{n,n} \\
\vdots & \vdots &  & \vdots & & \vdots\\
g_{1,j} & g_{2,j} & \cdots & g_{i,j} & \cdots & g_{n,j}\\
\vdots & \vdots &  & \vdots & & \vdots\\
g_{1,2} & g_{2,2} & \cdots & g_{i,2} & \cdots & g_{n,2}\\
g_{1,1} & g_{2,1} & \cdots & g_{i,1} & \cdots & g_{n,1}
\end{array}
\right)
\;\; \xmapsto[90^\circ \; \mathrm{right}]{\RotateRight} \;\;
M = \left(
\begin{array}{cccccc}
g_{1,1} & g_{1,2} & \cdots & g_{1,j} & \cdots & g_{1,n} \\
g_{2,1} & g_{2,2} & \cdots & g_{2,j} & \cdots & g_{2,n} \\
\vdots & \vdots &  & \vdots & & \vdots\\
g_{i,1} & g_{i,2} & \cdots & g_{i,j} & \cdots & g_{i,n}\\
\vdots & \vdots &  & \vdots & & \vdots\\
g_{n,1} & g_{n,2} & \cdots & g_{n,j} & \cdots & g_{n,n} 
\end{array}
\right).
$$

In the case of $n=2$, we see that $M$ is a prime matrix (see Example~\ref{exam:2by2}).  However in the case of $n=3$, the matrix $M$ can have repeated prime number entries and hence is not a prime matrix (see Example~\ref{exam:3by3}).  And in the case of $n\geq 4$, the matrix can have both repeated primes and composite number entries, and hence is not a prime matrix (see Example~\ref{exam:4by4}).  In these $n \geq 3$ cases, we construct a quasiprime version of $M$ which we denote $\QP$.  And an application of the \texttt{CRT}-algorithm on $\QP$ yields the $\Hforest{n}$ that has the original $\gcd$-matrix corresponding to $\Hforest{n}$.  Before we present an algorithm on how to produce $\QP$ from $M$, we give two motivating examples in the $n=2$ and $n=3$ cases.

\begin{example}[The closest $2 \times 2$]\label{exam:2by2}
By examining a small grid of points in Quadrant I of $\mathbb{Z}^2$, it is easy to notice that the closest $\Hforest{2}$ occurs at $x=14$ and $y=20$.  In the figure below, we give $H^2_{(14,20)}$ and to its right the $\gcd$-grid corresponding to the four nodes.
\begin{center}
\begin{tikzpicture}[scale=.75]
\draw (0,0)--(0,1)--(1,1)--(1,0)--(0,0);
\foreach \x in {0,1}
		\shade[ball color=blue] (\x,0) circle (.75ex);
\foreach \x in {0,1}
		\shade[ball color=blue] (\x,1) circle (.75ex);
\node [left] at (0,0) {\footnotesize (14,20)};
\node [left] at (0,1) {\footnotesize (14,21)};
\node [right] at (1,0) {\footnotesize (15,20)};
\node [right] at (1,1) {\footnotesize (15,21)};
\end{tikzpicture}
\hspace{1in}
\begin{tikzpicture}[scale=.75]
\draw (0,0)--(0,1)--(1,1)--(1,0)--(0,0);
\foreach \x in {0,1}
		\shade[ball color=blue] (\x,0) circle (.75ex);
\foreach \x in {0,1}
		\shade[ball color=blue] (\x,1) circle (.75ex);
\node [left] at (0,0) {\footnotesize 2};
\node [left] at (0,1) {\footnotesize 7};
\node [right] at (1,0) {\footnotesize 5};
\node [right] at (1,1) {\footnotesize 3};
\end{tikzpicture}
\end{center}
By Equation~(\ref{eqn:QP_converse}), we can retrieve a matrix $M$ from the $\gcd$-grid above as follows:
$$\mathrm{Gcd}_M = \left(
\begin{array}{cc}
7 & 3 \\
2 & 5 \end{array}
\right)
\;\; \xmapsto[90^\circ \; \mathrm{right}]{\RotateRight} \;\;
M=\left(
\begin{array}{cc}
2 & 7 \\
5 & 3 \end{array}
\right).
$$
Applying the \texttt{CRT}-algorithm to this matrix $M$, we get $x_0 = 13$ and $y_0 = 19$ as desired.  Hence at the distance of $d \approx 24.4131$ we have the closest hidden forest $H^2_{(14,20)}$.
\end{example}

\begin{remark}
Alternate permutations of the same primes in the gcd-grid, and consequently the matrix $M$, may produce different solutions under the \texttt{CRT}-algorithm. This means that for each unique set of $n^2$ primes in the matrix $P_n$, the \texttt{CRT}-algorithm can yield up to $n^2$ factorial (not necessarily distinct) $\Hforest{n}$. Compare the previous example with Example~\ref{exam:2by2_hidden_forest}. Both examples use the same set of primes $\{2,3,5,7\}$ but yield drastically different $\Hforest{2}$.
\end{remark}

\begin{example}[The closest $3 \times 3$]\label{exam:3by3}
At the distance of $d \approx 1825.91$ we find the closest hidden forest $H^3_{(1274,1308)}$.  Though others have cited $H^3_{(1274,1308)}$ as a hidden forest~\cite{Herzog, Weisstein}, none of these sources have asserted that it is the closest.  For the $n=3$ case, the problem of finding the closest hidden forest is computationally tractable via exhaustive means.  In fact, we have written Java code\footnote{See Appendix~\ref{appendix_A} for the Java code.} which exhaustively checked the square region with lower left endpoint $(0,0)$ and upper right endpoint $(1308,1308)$, finally confirming that this is the closest $3 \times 3$ hidden forest.  Below we give the hidden forest $H^3_{(1274,1308)}$ and its corresponding $\gcd$-grid:
\begin{center}
\begin{tikzpicture}[scale=.75]
\draw (0,0)--(0,2)--(2,2)--(2,0)--(0,0);
\draw (0,1)--(2,1);
\draw (1,0)--(1,2);
\foreach \x in {0,...,2}
		\shade[ball color=blue] (\x,0) circle (.75ex);
\foreach \x in {0,...,2}
		\shade[ball color=blue] (\x,1) circle (.75ex);
\foreach \x in {0,...,2}
		\shade[ball color=blue] (\x,2) circle (.75ex);
\node [left] at (0,0) {\tiny $(1274,1308)$};
\node [left] at (0,1) {\tiny $(1274,1309)$};
\node [left] at (0,2) {\tiny $(1274,1310)$};
\node [below] at (1,0) {\tiny $(1275,1308)$};
\node [below] at (1,1) {\tiny $(1275,1309)$};
\node [above] at (1,2) {\tiny $(1275,1310)$};
\node [right] at (2,0) {\tiny $(1276,1308)$};
\node [right] at (2,1) {\tiny $(1276,1309)$};
\node [right] at (2,2) {\tiny $(1276,1310)$};
\end{tikzpicture}
\hspace{.5in}
\begin{tikzpicture}[scale=.75]
\draw (0,0)--(0,2)--(2,2)--(2,0)--(0,0);
\draw (0,1)--(2,1);
\draw (1,0)--(1,2);
\foreach \x in {0,...,2}
		\shade[ball color=blue] (\x,0) circle (.75ex);
\foreach \x in {0,...,2}
		\shade[ball color=blue] (\x,1) circle (.75ex);
\foreach \x in {0,...,2}
		\shade[ball color=blue] (\x,2) circle (.75ex);
\node [left] at (0,0) {\footnotesize 2};
\node [left] at (0,1) {\footnotesize 7};
\node [left] at (0,2) {\footnotesize 2};
\node [below] at (1,0) {\footnotesize 3};
\node [below] at (1,1) {\footnotesize 17};
\node [above] at (1,2) {\footnotesize 5};
\node [right] at (2,0) {\footnotesize $2^2$};
\node [right] at (2,1) {\footnotesize 11};
\node [right] at (2,2) {\footnotesize 2};
\end{tikzpicture}
\end{center}
By Equation~(\ref{eqn:QP_converse}), we can retrieve a matrix $M$ from the $\gcd$-grid above as follows:
$$\mathrm{Gcd}_M = \left(
\begin{array}{ccc}
2 & 5 & 2 \\
7 & 17 & 11 \\
2 & 3 & 2^2 \end{array}
\right)
\;\; \xmapsto[90^\circ \; \mathrm{right}]{\RotateRight} \;\;
M=\left(
\begin{array}{ccc}
2 & 7 & 2 \\
3 & 17 & 5 \\
2^2 & 11 & 2 \end{array}
\right).
$$
Incidentally, the $M$ given here is very similar to the one given in 1971 by Herzog and Stewart~\cite{Herzog}, but neither neither of these matrices can possibly produce the correct $\Hforest{3}$ because the Chinese Remainder Theorem simply cannot work on such matrices.  For example, since the products of row 1 and row 3 of $M$ each have a factor of 4, then any solution $x_0$ to the three row equations would also have to satisfy $x+1 \equiv 0 \pmod{4}$ and $x+3 \equiv 0 \pmod{4}$, but the existence of such an $x_0$ is absurd.  However, this problem is resolved by introducing the concept of a quasiprime matrix.
\end{example}

\begin{definition}\label{def:quasiprime}
Given a matrix $M$ arising from a $\mathrm{Gcd}_M$ via Equation~(\ref{eqn:QP_converse}), we produce a \textit{quasiprime} matrix $\QP$ defined by the \texttt{QP}\textit{-algorithm} given below.

\bigskip

\boitegrise{
\noindent\textbf{The \texttt{QP}-algorithm to construct a quasiprime matrix $\QP$:}
\begin{enumerate}
\item Construct matrix $M$ arising from a $\mathrm{Gcd}_M$ via Equation~(\ref{eqn:QP_converse}).
\item Let $\{p_i\}_{i=1}^s$ be the union of the sets of all primes appearing in the prime factorizations of each entry of $M$.
\item For a fixed $p_i$ with $1 \leq i \leq s$, locate the entry in $M$ which contains $p_i^{k}$ for $k \geq 1$ such that $k$ is largest.  If there is more than one entry which contains $p_i^{k}$, then choose exactly one.
\item Place the selected $p_i^k$ in $\QP$ in the same location where it appears in $M$.  Place the value 1 in $\QP$ in every location where a $p_i^j$ appears in $M$ for each $j \leq k$.
\item Repeat the previous two steps for each $p_i$ with $1 \leq i \leq s$.
\end{enumerate}\vspace{-.2in}}{0.75\textwidth}

\end{definition}

\begin{example}[The closest $3 \times 3$ via a quasiprime matrix]
From the matrix $M$ in Example~\ref{exam:3by3}, we can produce the quasiprime matrix as follows using the \texttt{QP}-algorithm:
$$M=\left(
\begin{array}{ccc}
2 & 7 & 2 \\
3 & 17 & 5 \\
2^2 & 11 & 2 \end{array}
\right)
\;\; \xmapsto[\mathrm{algorithm}]{\mathrm{QP}} \;\;
\QP=\left(
\begin{array}{ccc}
1 & 7 & 1 \\
3 & 17 & 5 \\
2^2 & 11 & 1 \end{array}
\right).
$$
By use of the \texttt{CRT}-algorithm on $\QP$, we solve the following system of linear congruences
$$\begin{array}{lcl}
x+1 \equiv 0 \pmod{7} & \hspace{.2in} & y+1 \equiv 0 \pmod{2^2 \cdot 3}\\
x+2 \equiv 0 \pmod{3 \cdot 5 \cdot 17} &  \hspace{.2in} & y+2 \equiv 0 \pmod{7 \cdot 11 \cdot 17}\\
x+3 \equiv 0 \pmod{2^2 \cdot 11} &  \hspace{.2in} & y+3 \equiv 0 \pmod{5}
\end{array}$$
which has solutions $x_0 = 1273$ and $y_0 = 1307$.  Hence the $\QP$ yields the closest $3 \times 3$ hidden forest $H^3_{(1274,1308)}$.
\end{example}

\subsection{Computer-heavy approach: Strings of strongly composite integers}\label{subsec:strings}

Another technique we implement that proves very powerful in finding hidden forests involves using strings of consecutive integers each with several prime factors.  In Section~\ref{sec:the_great_unknown}, we find that combining the technique below with a clever computational use of quasiprime matrices yields the closest known $n \times n$ hidden forests for $n \geq 4$.

In 1990, Schumer proved that there exist strings of $n$ consecutive integers each divisible by at least $k$ distinct primes, which he calls strings of strongly composite integers~\cite{Schumer}.  The proof uses the Chinese Remainder Theorem, and hence like Theorem~\ref{thm:2by2_case} it produces very large numbers for $n \geq 3$.  However, there is an efficient way to find the smallest set of $n$ consecutive integers each with at least $k$ prime factors each (ignoring multiplicity).  We can then use these values as our $x$-values in our hunt for closer hidden forests for $n \geq 4$.

The following \texttt{Mathematica} code easily produces the very first number $n$ in a sequence of four consecutive integers each with four prime factors (ignoring multiplicity):
  \begin{lstlisting}[frame=single,language=Mathematica,caption={Nayuki Minase's solution to Project Euler Problem 47}, label={mathematica_code}]
    Has4PrimeFactors[n_] := Length[FactorInteger[n]] == 4
		i = 2;
		While[! (Has4PrimeFactors[i] && Has4PrimeFactors[i + 1] && 
    Has4PrimeFactors[i + 2] && Has4PrimeFactors[i + 3]), i++]
		i
  \end{lstlisting}
The value that it yields is 134,043.  This number and the next three consecutive integers have the following prime factorizations:
\begin{align*}
\numprint{134 043} &= 3 \cdot 7 \cdot 13 \cdot 491\\
\numprint{134 044} &= 2^2 \cdot 23 \cdot 31 \cdot 47\\
\numprint{134 045} &= 5 \cdot 17 \cdot 19 \cdot 83\\
\numprint{134 046} &= 2 \cdot 3^2 \cdot 11 \cdot 677.
\end{align*}

\begin{example}[The closest $4 \times 4$ to date with the smallest $x$-value]\label{exam:4by4}
Using the four values \numprint{134 043} to \numprint{134 046} as the $x$-values of $4 \times 4$ hidden forest we seek, we exhaustively searched for the very first set of four consecutive integers such that all four values share at least one prime factor with each of the four values \numprint{134 043} to \numprint{134 046}.  After running for only two minutes\footnote{The run times for the Java code are based on the code running on the Blugold Supercomputing Cluster of UWEC, while the run times for the \texttt{Mathematica} code are based on the code running on a standard office computer.}, the Java program which we wrote outputs the value \numprint{184 785 885}.  This number and the next three consecutive integers have the following prime factorizations:
\begin{align*}
\numprint{184 785 885} &= 3^2 \cdot 5 \cdot 31^2 \cdot 4273\\
\numprint{184 785 886} &= 2 \cdot 17 \cdot 491 \cdot \numprint{11 069}\\
\numprint{184 785 887} &= 11 \cdot 13 \cdot 19 \cdot 23 \cdot 2957\\
\numprint{184 785 888} &= 2^5 \cdot 3 \cdot 7 \cdot 83 \cdot 3313.
\end{align*}
Using these four numbers as the $y$-values of our $\Hforest{4}$ and the four values 134,043 to 134,046 as the $x$-values, we get a hidden forest $H^4_{(134043,184785885)}$ with the following $\gcd$-grid:
\begin{center}
\begin{tikzpicture}[scale=.75]
\draw (0,0)--(0,3)--(3,3)--(3,0)--(0,0);
\draw (0,1)--(3,1);
\draw (0,2)--(3,2);
\draw (1,0)--(1,3);
\draw (2,0)--(2,3);
\foreach \x in {0,...,3}
		\shade[ball color=blue] (\x,0) circle (.75ex);
\foreach \x in {0,...,3}
		\shade[ball color=blue] (\x,1) circle (.75ex);
\foreach \x in {0,...,3}
		\shade[ball color=blue] (\x,2) circle (.75ex);
\foreach \x in {0,...,3}
		\shade[ball color=blue] (\x,3) circle (.75ex);
\node [left] at (0,0) {\footnotesize 3};
\node [left] at (0,1) {\footnotesize 491};
\node [left] at (0,2) {\footnotesize 13};
\node [left] at (0,3) {\footnotesize $3 \! \cdot \! 7$};
\node [below] at (1,0) {\footnotesize 31};
\node [below] at (1,1) {\footnotesize 2};
\node [below] at (1,2) {\footnotesize 23};
\node [above] at (1,3) {\footnotesize $2^2$};
\node [below] at (2,0) {\footnotesize 5};
\node [below] at (2,1) {\footnotesize 17};
\node [below] at (2,2) {\footnotesize 19};
\node [above] at (2,3) {\footnotesize 83};
\node [right] at (3,0) {\footnotesize $3^2$};
\node [right] at (3,1) {\footnotesize 2};
\node [right] at (3,2) {\footnotesize 11};
\node [right] at (3,3) {\footnotesize $2 \! \cdot \! 3$};
\end{tikzpicture}
\end{center}
By Equation~(\ref{eqn:QP_converse}), we retrieve a matrix $M$ from the $\gcd$-grid above as follows:
$$\mathrm{Gcd}_M = \left(
\begin{array}{cccc}
3 \! \cdot \! 7 & 2^2 & 83 & 2 \! \cdot \! 3 \\
13 & 23 & 19 & 11 \\
491 & 2 & 17 & 2 \\
3 & 31 & 5 & 3^2 \end{array}
\right)
\;\; \xmapsto[90^\circ \; \mathrm{right}]{\RotateRight} \;\;
M=\left(
\begin{array}{cccc}
3 & 491 & 13 & 3 \! \cdot \! 7 \\
31 & 2 & 23 & 2^2 \\
5 & 17 & 19 & 83 \\
3^2 & 2 & 11 & 2 \! \cdot \! 3 \end{array}
\right).
$$
Applying the \texttt{QP}-algorithm to $M$, we get the following quasiprime matrix
$$
\QP=\left(
\begin{array}{cccc}
1 & 491 & 13 & 7 \\
31 & 1 & 23 & 2^2 \\
5 & 17 & 19 & 83 \\
3^2 & 1 & 11 & 1 \end{array}
\right).
$$
Then applying the \texttt{CRT}-algorithm to $\QP$ does indeed yield the forest $H^4_{(134043,184785885)}$ as desired.
\end{example}

\begin{remark}\label{rem:4x4_far_forest}
The hidden forest $H^4_{(134043,184785885)}$ is distance $d \approx 1.84786 \times 10^{8}$ from the origin.  In comparison recall Example \ref{exam:4by4_hidden_forest}, where we used the only known method to date in the literature (that is, the prime matrix $P_4$ and Theorem~\ref{thm:2by2_case}). Using that traditional method we found the hidden forest $H^4_{(x_1,y_1)}$ with
\begin{align*}
x_1 &= \numprint{2 847 617 195 518 191 810}\\
y_1 &= \numprint{1 160 906 121 308 397 398}
\end{align*}
at a distance $d \approx 3.07516 \times 10^{18}$ which is $1.66418 \times 10^{10}$ times farther than the hidden forest which we found in Example~\ref{exam:4by4}!  The matrix $P_4$ and its associated $\gcd$-grid of our closer hidden forest $H^4_{(134043,184785885)}$ is as follows:
$$
P_4 = \left(
\begin{array}{cccc}
2 & 3 & 5 & 7 \\
11 & 13 & 17 & 19 \\
23 & 29 & 31 & 37 \\
41 & 43 & 47 & 53 \end{array}
\right)
\;\; \xmapsto[\mathrm{\texttt{CRT}-algorithm}]{\mathrm{Theorem~\ref{thm:2by2_case}}} \;\;
\begin{tikzpicture}[baseline=(current bounding box.center), scale=.75]
\draw (0,0)--(0,3)--(3,3)--(3,0)--(0,0);
\draw (0,1)--(3,1);
\draw (0,2)--(3,2);
\draw (1,0)--(1,3);
\draw (2,0)--(2,3);
\foreach \x in {0,...,3}
		\shade[ball color=blue] (\x,0) circle (.75ex);
\foreach \x in {0,...,3}
		\shade[ball color=blue] (\x,1) circle (.75ex);
\foreach \x in {0,...,3}
		\shade[ball color=blue] (\x,2) circle (.75ex);
\foreach \x in {0,...,3}
		\shade[ball color=blue] (\x,3) circle (.75ex);
\node [left] at (0,0) {\footnotesize 2};
\node [left] at (0,1) {\footnotesize 3};
\node [left] at (0,2) {\footnotesize $2 \! \cdot \! 5$};
\node [left] at (0,3) {\footnotesize 7};
\node [below] at (1,0) {\footnotesize 11};
\node [below] at (1,1) {\footnotesize 13};
\node [below] at (1,2) {\footnotesize 17};
\node [above] at (1,3) {\footnotesize 19};
\node [below] at (2,0) {\footnotesize $2 \! \cdot \! 23$};
\node [below] at (2,1) {\footnotesize 29};
\node [below] at (2,2) {\footnotesize $2^2 \! \cdot \! 31$};
\node [above] at (2,3) {\footnotesize 37};
\node [right] at (3,0) {\footnotesize 41};
\node [right] at (3,1) {\footnotesize $3 \! \cdot \! 43$};
\node [right] at (3,2) {\footnotesize 47};
\node [right] at (3,3) {\footnotesize 53};
\end{tikzpicture}
$$
Hence the $\gcd$-matrix of this $4 \times 4$ hidden forest is
$$\mathrm{Gcd}_M = \left(
\begin{array}{cccc}
7 & 19 & 37 & 53 \\
2 \! \cdot \! 5 & 17 & 2^2 \! \cdot \! 31 & 47 \\
3 & 13 & 29 & 3 \! \cdot \! 43 \\
2 & 11 & 2 \! \cdot \! 23 & 41 \end{array}
\right).
$$
\end{remark}

\begin{remark}
Other researchers have found $4 \times 4$ hidden forests of distances relatively close to the one shown in Remark~\ref{rem:4x4_far_forest}.  In 2002 Pighizzini and Shallit addressed the issue of finding the closest $n \times n$ hidden forests~\cite{Pigh02}.  For a positive integer $n$, they define a function $S(n)$, which is the least positive integer $r$ such that there exists $m \in \{0,1,\ldots,r\}$ with $\gcd(r-i,m-j)>1$ for $0 \leq i,j < n$. This is equivalent to finding the closest $n \times n$ hidden forest. They were only successful in finding this value for $n=1, 2, 3$, but for $n=4$ they were able to give bounds $450000 < S(4) \leq 172379781$ by finding a hidden forest $\Hforest{4}$ with $x = \numprint{172 379 778}$ and $y = \numprint{153 132 342}$. An even closer $4 \times 4$ hidden forest was later revealed in 2013 in a book by Baake and Grimm~\cite[pg.422]{Baake2013}. The forest they find has bottom left corner $x = \numprint{13 458 288}$ and $y = \numprint{13 449 225}$ however no proof or justification of how this was found is given. Moreover, they give no assertion regarding whether this is the closest known $4 \times 4$ hidden forest. The following table gives the distances of the three closest known $4 \times 4$ hidden forests in the literature.
\end{remark}

\begin{center}
\begin{tabular}{|c||c|c|c|}
\hline
 & Year & Distance of $\Hforest{4}$ & Proof/method given? \\ \hline\hline
Pighizzini and Shallit 		& 2002 &  $2.30574 \times 10^{8}$    & No\\ \hline
Baake and Grimm  & 2013 & $1.90265 \times 10^{7}$     & No\\ \hline
Goodrich, Mbirika, and Nielsen 	& 2014\,\footnotemark & $1.84786 \times 10^{8}$     & Yes\\ \hline     
\end{tabular}
\footnotetext{We discovered this forest in 2014; however, it is in this 2020 paper in which we give its existence and proof.}
\end{center}

\subsection{Computer-free approach: Minimum prime factors in an optimal gcd-matrix}\label{subsec:min_prime_factors}

The concept of an ``optimal'' $\gcd$-matrix for a hidden $n \times n$ forest $\Hforest{n}$ depends on $n$ and is based on the minimal number of prime factors required in the $\gcd$-grid of $\Hforest{n}$.  We find that minimizing the number of primes used in the $\gcd$-matrix while simultaneously maximizing the number of locations in the $\gcd$-grid where a prime can be used again leads to a closer $\Hforest{n}$ than the traditional method given in Section~\ref{sec:the_CRT_algorithm}.

Observe that the $\gcd$-matrix of the $H_{x,y}^{(4)}$ in Remark~\ref{rem:4x4_far_forest} is hardly optimal in the sense that if the corner entries were all multiples of 3, then we immediately get the four corners ``hidden for free'', as in the forest in Example~\ref{exam:4by4}---that is, the values $x_1, x_4, y_1$ and $y_4$ would all be divisible by 3 and hence none of the four points $(x_1,y_1)$, $(x_4,y_1)$, $(x_1,y_4)$, or $(x_4,y_4)$ would be visible.  An optimal situation is to have one corner, for example, the bottom-left coordinate $(x_1,y_1)$ to be divisible by both 2 and 3.  Then we would have a forest where the $\gcd$ of the following 16 coordinates are divisible by 2, 3, and nine other primes $p_1, \ldots, p_9$ as in Figure~\ref{fig:optimal_4_by4_grid}.
\begin{figure}[H]
\begin{center}
$$
\begin{tikzpicture}[baseline=(current bounding box.center), scale=.75]
\draw (0,0)--(0,3)--(3,3)--(3,0)--(0,0);
\draw (0,1)--(3,1);
\draw (0,2)--(3,2);
\draw (1,0)--(1,3);
\draw (2,0)--(2,3);
\foreach \x in {0,...,3}
		\shade[ball color=blue] (\x,0) circle (.75ex);
\foreach \x in {0,...,3}
		\shade[ball color=blue] (\x,1) circle (.75ex);
\foreach \x in {0,...,3}
		\shade[ball color=blue] (\x,2) circle (.75ex);
\foreach \x in {0,...,3}
		\shade[ball color=blue] (\x,3) circle (.75ex);
\node [left] at (0,0) {\footnotesize $2\times 3$};
\node [left] at (0,1) {\footnotesize $p_3$};
\node [left] at (0,2) {\footnotesize 2};
\node [left] at (0,3) {\footnotesize 3};
\node [below] at (1,0) {\footnotesize $p_1$};
\node [below] at (1,1) {\footnotesize $p_2$};
\node [below] at (1,2) {\footnotesize $p_5$};
\node [above] at (1,3) {\footnotesize $p_9$};
\node [below] at (2,0) {\footnotesize 2};
\node [below] at (2,1) {\footnotesize $p_4$};
\node [below] at (2,2) {\footnotesize 2};
\node [above] at (2,3) {\footnotesize $p_8$};
\node [right] at (3,0) {\footnotesize 3};
\node [right] at (3,1) {\footnotesize $p_6$};
\node [right] at (3,2) {\footnotesize $p_7$};
\node [right] at (3,3) {\footnotesize 3};
\end{tikzpicture}
$$
\end{center}
\vspace{-.25in}
\caption{}\label{fig:optimal_4_by4_grid}
\end{figure}

This leads one to consider a different type of $\gcd$-matrix that does not give the exact $\gcd$ $g_{i,j}$ (recall Figure~\ref{fig:generic_gcd_grid}) for each coordinate $(x_i, x_j)$ (recall Figure~\ref{fig:generic_hidden_forest}) of $\Hforest{n}$.  But on the other hand, this new matrix would simply give the smallest prime divisor of the $\gcd$ for each coordinate.  We make this more precise in Definition~\ref{defn:optimal_gcd_matrix}.  But first we need to recall the following number-theoretic function.

\begin{definition}
The \textit{prime counting function} $\pi : \mathbb{R} \rightarrow \mathbb{N}$ counts the number of primes less than or equal to a given real number.
\end{definition}

\begin{definition}\label{defn:optimal_gcd_matrix}
Construct an \textit{optimal gcd-matrix} as follows.  Let one of the four corner entries of the $n \times n$ matrix contain the product of the first $k_n := \pi(n)$ primes (where $\pi$ is the prime counting function).  Without loss of generality, choose the bottom-left corner for this value.  Denote these first $k_n$ primes as $q_1, q_2, \ldots, q_{k_n}$.  For each $q_i$ with $1 \leq i \leq k_n$, any entry in the matrix that is a multiple of $q_i$ rows to the right of the bottom-left corner and/or a multiple of $q_i$ columns above the bottom-left corner must be filled with the value $q_i$.  If more than one prime fits this criteria for a specific matrix entry, then simply multiply the primes in that entry together.  In the remaining unfilled entries, place one prime in each entry from the set of the next smallest primes larger than the prime $q_{k_n}$.  Denote this set of primes by $\{p_1, p_2, \ldots\}$.  We denote this optimal $\gcd$-matrix by the symbol $\optGcd$.
\end{definition}

\begin{example}
An optimal $4 \times 4$ $\gcd$-matrix is
$$
\optGcd = \left(
\begin{array}{cccc}
3 & p_9 & p_8 & 3 \\
2 & p_5 & 2 & p_7 \\
p_3 & p_2 & p_4 & p_6 \\
2 \times 3 & p_1 & 2 & 3\end{array}
\right)
$$
where the entries $p_1, \ldots, p_9$ are the nine smallest prime numbers other than 2 or 3.  Observe that the locations of the primes 2 and 3 correspond exactly to their location in the $\gcd$-grid in Figure~\ref{fig:optimal_4_by4_grid}. The manner in which the $p_1$ through $p_9$ are distributed in this particular matrix is the $n=4$ case that arises from the grid in Figure~\ref{fig:optimal_gcd_grid}.
\end{example}

The grid in Figure~\ref{fig:optimal_gcd_grid} shows us the minimum number of primes and their relative locations in a candidate for an optimal $\gcd$-matrix for an $n \times n$ hidden forest.  In this grid, we choose the bottom-left corner (denoted with the symbol $\bullet$) to contain the product of powers of the first $k_n$ primes where $k_n$ is the value given in Definition~\ref{defn:optimal_gcd_matrix}.  In the far-left shaded column, each entry refers to the size $n$ of the corresponding $n \times n$ grid.  In the bottom shaded row, in each box we give the number of additional primes that are needed to go from an $n \times n$ grid to an $(n+1) \times (n+1)$ grid.  For example, for $n=5$, we need a minimum $k_5 + 3 + 2 + 4 + 4 = 15$ distinct primes in the optimal $\gcd$-matrix for an $H^{(5)}_{x,y}$.  Indeed in Section~\ref{sec:the_great_unknown}, we see that this minimum is achieved.

\begin{figure}[H]
\centering
		\begin{minipage}{.5\textwidth}
			\centering
			\includegraphics[width=1\linewidth, height=0.4\textheight]{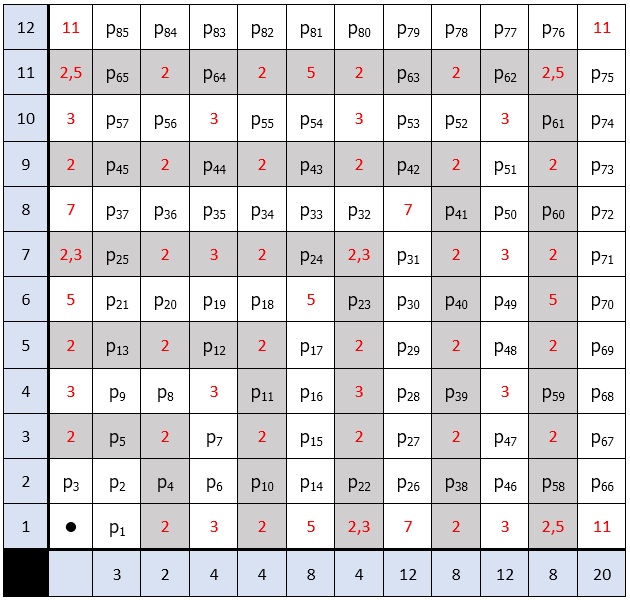}
		\end{minipage}
\hspace{.1in}
		\begin{minipage}{.3\textwidth}
			\centering
			\begin{tabular}{c|c|c}
			$n$ & $k_n$ & $\bullet$-value                \\ \hline
			2   & 1     & 2                              \\
			3   & 1     & 2                              \\
			4   & 2     & $2 \times 3$                   \\
			5   & 2     & $2 \times 3$                   \\
			6   & 3     & $2 \times 3 \times 5$          \\
			7   & 3     & $2 \times 3 \times 5$          \\
			8   & 4     & $2 \times 3 \times 5 \times 7$ \\
			9   & 4     & $2 \times 3 \times 5 \times 7$ \\
			10  & 4     & $2 \times 3 \times 5 \times 7$ \\
			11  & 4     & $2 \times 3 \times 5 \times 7$ \\
			12  & 5     & $2 \times 3 \times 5 \times 7 \times 12$
			\end{tabular}
		\end{minipage}
\caption{An optimal gcd-grid with bottom left corner having the product of $k_n$ primes}\label{fig:optimal_gcd_grid}
\end{figure}

\begin{example}\label{exam:computer_free_4_by_4}
If $n=4$ then $k_4 = 2$, and hence we place $2\times 3$ in a corner location.  This is because the $4 \times 4$ portion of the grid in Figure~\ref{fig:optimal_gcd_grid} says that we need a minimum of 9 primes, not counting the primes 2 and 3 which are placed in the locations where they appear in the grid.  Hence an \textit{optimal gcd-matrix} might be as follows:
$$\optGcd = \left(
\begin{array}{cccc}
3 & \framebox[1.1\width]{29} & \framebox[1.1\width]{31} & 3 \\
2 & \framebox[1.1\width]{19} & 2 & \framebox[1.1\width]{23} \\
\framebox[2.2\width]{7} & \framebox[1.1\width]{11} & \framebox[1.1\width]{13} & \framebox[1.1\width]{17} \\
2 \times 3 & \framebox[2.2\width]{5} & 2 & 3 \end{array}
\right).
$$
In the boxed entries in the matrix above, we place the 9 smallest primes larger than 3 where the grid in Figure~\ref{fig:optimal_gcd_grid} places $p_1, \ldots, p_9$.

Observe that the forest $H_{(134043,184785885)}^4$ found in Example~\ref{exam:4by4} is the closest known $4 \times 4$ forest and is attained by cleverly using the method of \textit{strings of strongly composite integers}. That is, using computer computation to find the smallest four consecutive values $x_1, \ldots, x_4$ which each have at least 4 primes factors each, and then using computer computation again to compute the next set of four values $y_1, \ldots, y_4$ each of which is not relatively prime to all four $x$-values.  However, the $\QP$ associated to this closest forest is not optimal in the sense that it uses 10 primes (not including 2 and 3), whereas an optimal $\QP$ uses at most 9 primes (not including 2 and 3).

Note that much computer assistance was required to generate $H_{(134043,184785885)}^4$, however no computer assistance whatsoever is required to create the optimal $\gcd$-matrix, $\optGcd$.  From the matrix $\optGcd$ in this example, we can produce the quasiprime matrix as follows using the \texttt{QP}-algorithm:
$$\optGcd = \left(
\begin{array}{cccc}
3 & 29 & 31 & 3 \\
2 & 19 & 2 & 23 \\
7 & 11 & 13 & 17 \\
2 \times 3 & 5 & 2 & 3 \end{array}
\right)
\;\; \xmapsto[\mathrm{algorithm}]{\mathrm{QP}} \;\;
\QP = \left(
\begin{array}{cccc}
1 & 29 & 31 & 1 \\
1 & 19 & 1 & 23 \\
7 & 11 & 13 & 17 \\
6 & 5 & 1 & 1 \end{array}
\right).
$$
Applying the \texttt{CRT}-algorithm on $\QP$, we then get the forest $\Hforest{4}$ with $x = \numprint{153 630 616 137}$ and $y = \numprint{116 380 988 514}$ and the following prime factorizations of the 16 coordinates $(x_i, y_j)$ for all $1 \leq i,j \leq 4$:
{\footnotesize
\begin{align*}
x_1 &= \numprint{153 630 616 137} = 3 \cdot 29 \cdot 31 \cdot 229 \cdot \numprint{248 749} &\hspace{.2in} y_1 &= \numprint{116 380 988 514} = 2 \cdot 3^3 \cdot 7 \cdot 37 \cdot \numprint{8 321 249}\\
x_2 &= \numprint{153 630 616 138} = 2 \cdot 19 \cdot 23 \cdot 1723 \cdot \numprint{102 019} & \hspace{.2in} y_2 &= \numprint{116 380 988 515} = 5 \cdot 11 \cdot 19 \cdot 29 \cdot 47 \cdot 101 \cdot 809\\
x_3 &= \numprint{153 630 616 139} = 7 \cdot 11 \cdot 13 \cdot 17 \cdot \numprint{9 028 067} &\hspace{.2in} y_3 &= \numprint{116 380 988 516} = 2^2 \cdot 13 \cdot 31 \cdot \numprint{72 196 643}.\\
x_4 &= \numprint{153 630 616 140} = 2^2 \cdot 3^3 \cdot 5 \cdot 103 \cdot \numprint{2 762 147} &\hspace{.2in} y_4 &= \numprint{116 380 988 517} = 3 \cdot 17 \cdot 23 \cdot 883 \cdot \numprint{112 363}.
\end{align*}
}

Below we summarize the distances of the $4 \times 4$ hidden forests found by the traditional method versus the two new methods given in this paper.

\begin{center}
\begin{tabular}{|c||c|c|}
\hline
Method & Distance & Location in paper\\ \hline\hline
Traditional approach: \texttt{CRT}-algorithm 		&  $3.07516 \times 10^{18}$    & Example~\ref{exam:4by4_hidden_forest}\\ \hline
Computer-heavy approach with \texttt{QP}-algorithm  & $1.84786 \times 10^{8}$     & Example~\ref{exam:4by4}\\ \hline
Computer-free approach with \texttt{QP}-algorithm 	& $1.92735 \times 10^{11}$     & Example~\ref{exam:computer_free_4_by_4}\\ \hline              
\end{tabular}
\end{center}
So we can easily see that the two new methods produce substantially closer hidden forests than the traditional methods.  However, we find that merging the method of strings of composite integers with the method of the optimal matrix is an even better idea.  And that is precisely what we do in the $5 \times 5$ case in the following section.
\end{example}


\section{An application: the closest known \texorpdfstring{$5 \times 5$}{5 x 5} hidden forest}\label{sec:the_great_unknown}

We employ a combination of the techniques from both strings of strongly composite integers and an optimal quasiprime matrix to find the closest known $5 \times 5$ hidden forest to date.  We first calculate a length~5 analogue of the Project Euler Problem 47 (recall the footnote given in Remark~\ref{rem:Project_Euler}).  By slightly altering Minase's solution (see Listing~\ref{mathematica_code}), we find the smallest set of five consecutive integers each with at least five prime factors.  \texttt{Mathematica} completed this computation in 36 minutes.  These five integers and their prime factorizations are
\begin{align*}
x_1 = \numprint{129 963 314} &= 2 \cdot 13 \cdot 37 \cdot 53 \cdot 2549\\
x_2 = \numprint{129 963 315} &= 3 \cdot 5 \cdot 31 \cdot 269 \cdot 1039\\
x_3 = \numprint{129 963 316} &= 2^2 \cdot 7 \cdot 97 \cdot 109 \cdot 439\\
x_4 = \numprint{129 963 317} &= 11^2 \cdot 17 \cdot 23 \cdot 41 \cdot 67\\
x_5 = \numprint{129 963 318} &= 2 \cdot 3 \cdot 89 \cdot 199 \cdot 1223.
\end{align*}
In Example~\ref{exam:4by4}, it took the Java code only 2 minutes to find the smallest four consecutive values which are each not relatively prime to the four values \numprint{134 043} through \numprint{134 046}.  However in this $n=5$ case, it is not as simple.  After the Java code ran continuously for four days
\footnote{Recall the footnote regarding computation times in Example~\ref{exam:4by4}.}
, it had checked up to the $y$-value of 500 billion and still did not find an $\Hforest{5}$ with the $x$-values $x_1, \ldots, x_5$ given earlier.  So we approached this problem from a more theoretical perspective instead.

Consider the list of five consecutive integers $x_1, \ldots, x_5$.  Observe that $x_1$, $x_3$, and $x_5$ are divisible by 2 and that $x_2$ and $x_5$ are divisible by 3.  Hence a hidden $5 \times 5$ forest bearing these $x$-values would be ``optimal'' if the corresponding five $y$-values (which we denote $y_1, \ldots, y_5$) have the property that $y_1$, $y_3$, and $y_5$ are divisible by 2 and that $y_2$ and $y_5$ are divisible by 3.  The benefit of this optimal situation is that 12 of the 25 coordinates will automatically have $\gcd(x_i,y_j) > 1$ and hence these 12 points are hidden.  In the matrices below, we represent each of these 12 points with the symbol $\bullet$ in the $\gcd$-matrix $\mathrm{Gcd}_M$ on the left, and to its right we give the $90^{\circ}$ clockwise rotation matrix $M$ from which we construct a quasiprime matrix.
$$\mathrm{Gcd}_M=
\begin{blockarray}{rlccccc}
		\begin{block}{rl(ccccc)}
			y_5 & \rightarrow & \bullet & \bullet & \bullet & d_5 & \bullet \\
			y_4 & \rightarrow & a_2 & b_3 & c_2 & d_4 & e_1 \\
			y_3 & \rightarrow & \bullet & b_2 & \bullet & d_3 & \bullet \\
			y_2 & \rightarrow & a_1 & \bullet & c_1 & d_2 & \bullet \\
			y_1 & \rightarrow & \bullet & b_1 & \bullet & d_1 & \bullet \\
		\end{block}
	& & \uparrow & \uparrow & \uparrow &  \uparrow & \uparrow \\	
	& & x_1 & x_2 & x_3 & x_4 & x_5 \\
\end{blockarray}
\; \xmapsto[90^\circ \; \mathrm{right}]{\RotateRight} \;
M=
\begin{blockarray}{rlccccc}
	& & y_1 & y_2 & y_3 & y_4 & y_5 \\
	& & \downarrow & \downarrow & \downarrow &  \downarrow & \downarrow \\
		\begin{block}{rl(ccccc)}
			x_1 & \rightarrow & \bullet & a_1 & \bullet & a_2 & \bullet \\
			x_2 & \rightarrow & b_1 & \bullet & b_2 & b_3 & \bullet \\
			x_3 & \rightarrow & \bullet & c_1 & \bullet & c_2 & \bullet \\
			x_4 & \rightarrow & d_1 & d_2 & d_3 & d_4 & d_5 \\
			x_5 & \rightarrow & \bullet & \bullet & \bullet & e_1 & \bullet \\
		\end{block}
\end{blockarray}
$$
Since we know that $x_5$ and $y_5$ are both divisible by 2 and 3 in this optimal case, we place a 6 in this entry, and the $\QP$ matrix has the following abstract form
\begin{equation}\label{matrix:5x5}
\QP = \left(
\begin{array}{ccccc}
1 & a_1 & 1 & a_2 & 1 \\
b_1 & 1 & b_2 & b_3 & 1 \\
1 & c_1 & 1 & c_2 & 1 \\
d_1 & d_2 & d_3 & d_4 & d_5 \\
1 & 1 & 1 & e_1 & 6 \end{array}
\right)
\end{equation}
where
$$
\begin{array}{rcrl}
x_1 = 2 \cdot 13 \cdot 37 \cdot 53 \cdot 2549 & \Longrightarrow & a_1,a_2 &\in \{ 13, 37, 53, 2549\}, \\
x_2 = 3 \cdot 5 \cdot 31 \cdot 269 \cdot 1039 & \Longrightarrow & b_1,b_2, b_3 &\in \{ 5, 31, 269, 1039\}, \\
x_3 = 2^2 \cdot 7 \cdot 97 \cdot 109 \cdot 439 & \Longrightarrow & c_1,c_2 &\in \{ 7, 97, 109, 439\}, \\
x_4 = 11^2 \cdot 17 \cdot 23 \cdot 41 \cdot 67 & \Longrightarrow & d_1,d_2, d_3, d_4, d_5 &\in \{ 11, 17, 23, 41, 67\}, \mbox{ and} \\
x_5 = 2 \cdot 3 \cdot 89 \cdot 199 \cdot 1223 & \Longrightarrow & e_1 &\in \{ 89, 199, 1223\}.
\end{array}
$$

\begin{observation}\label{lem:5_by_5_CRT_on_QP_matrix}
Consider the $\QP$ matrix in $(\ref{matrix:5x5})$. Then the following hold.
\begin{enumerate}[(1)]
\item There are \numprint{1 244 160} distinct ways to produce a quasiprime matrix $\QP$.
\item Applying the \texttt{CRT}-algorithm to any of the $\QP$ yields the same solution values $x_1, \ldots, x_5$ as the $x$-values of the hidden forest $H_{(x_1,y_1)}^5$. In particular, this unique $x_1$ value is \numprint{129 963 314}.
\item The $y$-value solutions have the property that $y_1, y_3, y_5 \in 2\mathbb{Z}$ and $y_2, y_5 \in 3\mathbb{Z}$.
\end{enumerate}
\end{observation}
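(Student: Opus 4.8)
The plan is to dispatch the three claims in turn: claim~(1) by a direct enumeration, claim~(2) by an application of the Chinese Remainder Theorem after a coprimality check, and claim~(3) as a one-line consequence of the entry $6$ occupying position $(5,5)$.

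For claim~(1), the quasiprime matrices in question are exactly those of the shape displayed in~(\ref{matrix:5x5}), and such a matrix is determined by --- and freely determined by --- five independent choices: an ordered pair $(a_1,a_2)$ of distinct elements of $\{13,37,53,2549\}$, an ordered triple $(b_1,b_2,b_3)$ of distinct elements of $\{5,31,269,1039\}$, an ordered pair $(c_1,c_2)$ of distinct elements of $\{7,97,109,439\}$, an ordering $(d_1,d_2,d_3,d_4,d_5)$ of all of $\{11,17,23,41,67\}$, and a single element $e_1\in\{89,199,1223\}$. The only point needing an argument is why repetitions \emph{within a row} must be excluded: two equal entries of a common row necessarily lie in two distinct columns, so the corresponding column products would share that prime and the \texttt{CRT}-algorithm could not be applied (its column moduli would fail to be pairwise coprime); all other pairs of entries lie in groups with pairwise-disjoint prime sets and so are automatically unequal, which is what makes this the only constraint to impose. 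Since the symbols occupy fixed positions, distinct choices give distinct matrices, so the count is $(4\cdot 3)(4\cdot 3\cdot 2)(4\cdot 3)(5!)(3)=12\cdot 24\cdot 12\cdot 120\cdot 3=\numprint{1 244 160}$.

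For claim~(2), write $R_1=a_1a_2$, $R_2=b_1b_2b_3$, $R_3=c_1c_2$, $R_4=d_1d_2d_3d_4d_5$, and $R_5=6e_1$ for the row products of $\QP$. First I would check that $R_i\mid x_i$ for each $i$: for $i\le 4$, each $R_i$ is a product of distinct primes dividing $x_i$; and $R_5=2\cdot 3\cdot e_1$, with $2,3,e_1$ distinct primes dividing $x_5=2\cdot 3\cdot 89\cdot 199\cdot 1223$. Second, the $R_i$ are pairwise coprime, since their prime supports lie in the pairwise-disjoint sets $\{13,37,53,2549\}$, $\{5,31,269,1039\}$, $\{7,97,109,439\}$, $\{11,17,23,41,67\}$, and $\{2,3,89,199,1223\}$, which together consist of $21$ distinct primes. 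Hence the \texttt{CRT}-algorithm applies and returns the unique $x_0\in[0,\prod_i R_i)$ with $x_0+i\equiv 0\pmod{R_i}$ for all $i$; since $x_0+i=x_1-1+i=x_i$ and $R_i\mid x_i$, the value $x_0=x_1-1$ is such a solution, and a trivial size bound (for instance $\prod_i R_i\ge R_2R_4>10^{11}$) shows $0\le x_1-1<\prod_i R_i$. Thus the algorithm outputs $x_0=\numprint{129 963 313}$, so $\mathbf{X}=\{x_0+1,\dots,x_0+5\}=\{x_1,\dots,x_5\}$, and in particular $x_1=\numprint{129 963 314}$, for each of the $\numprint{1 244 160}$ matrices.

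For claim~(3), I would run the mirror argument on the columns: the column products $C_1=b_1d_1$, $C_2=a_1c_1d_2$, $C_3=b_2d_3$, $C_4=a_2b_3c_2d_4e_1$, and $C_5=6d_5$ each use a distinct $d$-prime and otherwise primes from the disjoint groups above, so they are pairwise coprime and the \texttt{CRT}-algorithm produces a $y_0$ (unique modulo $\prod_j C_j$) with $y_0+j\equiv 0\pmod{C_j}$. Since $6\mid C_5$, the congruence $y_0+5\equiv 0\pmod{C_5}$ forces $y_0\equiv 1\pmod 6$; hence $y_0$ is odd, so $y_1,y_3,y_5\in 2\mathbb{Z}$, and $y_0\equiv 1\pmod 3$, so $y_2,y_5\in 3\mathbb{Z}$. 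The only step that genuinely requires care is claim~(1) --- checking that the enumeration is both exhaustive and irredundant, i.e.\ that within-row distinctness is exactly the constraint that needs to be imposed --- after which claims~(2) and~(3) are bookkeeping resting on the single structural fact that, once the primes $2$ and $3$ shared among the $x_i$ are absorbed into the lone entry $6$, the primes appearing in distinct rows (and in distinct columns) of $\QP$ are pairwise disjoint.
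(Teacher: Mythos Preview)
Your proof is correct. Parts~(1) and~(3) follow essentially the same route as the paper --- a permutation count for~(1) and the observation that $6\mid C_5$ forces the stated divisibilities for~(3) --- though you supply more justification on both fronts (why within-row distinctness is precisely the constraint to impose, and why the column products are pairwise coprime so that the \texttt{CRT}-algorithm genuinely applies).

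The substantive difference is part~(2). The paper explicitly admits it has no proof of~(2) beyond computational exhaustion in Mathematica, and later flags the question of \emph{why} all \numprint{1 244 160} matrices yield the same $x$-value as open (Question~\ref{question:every_hidden_forest_can_be_found_via_QP_matrix}(a)). Your argument actually settles this: since each row product $R_i$ divides $x_i$ (the $a$-, $b$-, $c$-, $d$-, $e$-primes and the factor $6$ are all divisors of the corresponding $x_i$) and the $R_i$ are pairwise coprime (their prime supports lie in five disjoint pools), the integer $x_0=x_1-1$ satisfies all five row congruences; the easy size bound $x_1-1<\prod_i R_i$ then forces it to be the \texttt{CRT} output regardless of which of the \numprint{1 244 160} matrices was chosen. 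This is more elementary than exhaustive verification and explains the phenomenon rather than merely confirming it.
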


\noindent \textbf{Proof of (1):} There are $P(4,2) = \frac{4!}{(4-2)!} = 12$ possible 2-permutations of a 4-element set.  So since $a_1$ and $a_2$ must be distinct elements of $\{13, 37, 53, 2549\}$, the ordered tuple $(a_i)_{i=1}^2$ can be chosen in $12$ ways.  Applying a similar argument to count the possible $(b_i)_{i=1}^3$, $(c_i)_{i=1}^2$, $(d_i)_{i=1}^5$, and the $(e_1)$, we see that the ordered tuple $(b_i)_{i=1}^3$ can be chosen in $12$ ways, the $(c_i)_{i=1}^2$ in $12$ ways, the $(d_i)_{i=1}^5$ in $120$ ways, and $(e_1)$ in $3$ ways. Thus there are \numprint{1 244 160} distinct ways to produce a quasiprime matrix $\QP$, which proves (1).

\bigskip
\noindent \textbf{Proof of (2):} Unfortunately, we only proved this by computational exhaustion using \texttt{Mathematica}. See part~(a) of Question~\ref{question:every_hidden_forest_can_be_found_via_QP_matrix}.

\bigskip
\noindent \textbf{Proof of (3):} Consider an arbitrary $\QP$.  Suppose $y_0$ is a solution to the five linear congruences $y+k \equiv 0 \pmod{C_k}$ where $C_k$ equals the product of the column entries of $\QP$ for $1 \leq k \leq 5$.  Setting $y_k = y_0 +k$, we observe that $y_5 \equiv 0 \pmod{6 \cdot d_5}$, and thus $y_5 \equiv 0 \pmod{2}$ and $y_5 \equiv 0 \pmod{3}$.  Hence $y_5 \in 2\mathbb{Z} \cap 3\mathbb{Z}$.  Since $y_5 \in 2\mathbb{Z}$, it follows that $y_3 = y_5-2$ implies $y_3 \in 2\mathbb{Z}$, and $y_1 = y_5-4$ implies $y_1 \in 2\mathbb{Z}$.  Moreover since $y_5 \in 3\mathbb{Z}$, it follows that $y_2 = y_5-3$ implies $y_2 \in 3\mathbb{Z}$. Thus (3) holds.

\bigskip

We wrote a program in \texttt{Mathematica} which applies the \texttt{CRT}-algorithm to each of the possible \numprint{1 244 160} matrices.  Four minutes later, the program yields that the smallest $y$-value solution is given by the following quasiprime matrix:
\begin{equation}\label{matrix:5x5_QP_Matrix}
\QP = \left(
\begin{array}{ccccc}
1 & 37 & 1 & 13 & 1 \\
31 & 1 & 5 & 269 & 1 \\
1 & 109 & 1 & 7 & 1 \\
67 & 17 & 41 & 23 & 11 \\
1 & 1 & 1 & 89 & 6 \end{array}
\right).
\end{equation}
This $y$-value and the next four consecutive integers have the following prime factorizations (with commas omitted in the factorizations for readability):
\begin{align*}
y_1 = \numprint{2 546 641 254 872 348} &= 2^2 \cdot 31 \cdot 67 \cdot 461 \cdot 664921471\\
y_2 = \numprint{2 546 641 254 872 349} &= 3^2 \cdot 17 \cdot 37 \cdot 109 \cdot 8681 \cdot 475421\\
y_3 = \numprint{2 546 641 254 872 350} &= 2 \cdot 5^2 \cdot 41 \cdot 11113 \cdot 111784759\\
y_4 = \numprint{2 546 641 254 872 351} &= 7^2 \cdot 13 \cdot 23 \cdot 73 \cdot 89 \cdot 269 \cdot 271 \cdot 367\\
y_5 = \numprint{2 546 641 254 872 352} &= 2^5 \cdot 3 \cdot 11 \cdot 2411592097417.
\end{align*}
Comparing the $x_1, \ldots, x_5$ with the $y_1, \ldots, y_5$ we see that $\gcd(x_i,y_j)>1$ for all $1 \leq i,j \leq 5$ and in fact $H^5_{(x_1,y_1)}$ has the following $\gcd$-matrix $\mathrm{Gcd}_M$ and corresponding matrix $M$:
$$\mathrm{Gcd}_M = \left(
\begin{array}{ccccc}
2 & 3 & 2^2 & 11 & 2 \! \cdot \! 3 \\
13 & 269 & 7 & 23 & 89 \\
2 & 5 & 2 & 41 & 2 \\
37 & 3 & 109 & 17 & 3 \\
2 & 31 & 2^2 & 67 & 2 \end{array}
\right)
\; \xmapsto[90^\circ \; \mathrm{right}]{\RotateRight} \;
M = \left(
\begin{array}{ccccc}
2 & 37 & 2 & 13 & 2 \\
31 & 3 & 5 & 269 & 3 \\
2^2 & 109 & 2 & 7 & 2^2 \\
67 & 17 & 41 & 23 & 11 \\
2 & 3 & 2 & 89 & 2 \! \cdot \! 3 \end{array}
\right).
$$

\begin{remark}
If we apply the \texttt{QP}-algorithm to the $M$ above, then we are forced to place a $2^2$ in either the $(3,1)$- or $(3,5)$-entry of $\QP$, and consequently the $6$ in the $(5,5)$ entry becomes a 3.  Hence this new $\QP$ differs from the quasiprime matrix in~(\ref{matrix:5x5_QP_Matrix}).  However, applying the \texttt{CRT}-algorithm to this new $\QP$ gives the same hidden forest as expected.
\end{remark}

\begin{remark}
The forest $\Hforest{5}$ with $x=\numprint{129 963 314}$ and $y=\numprint{2 546 641 254 872 348}$ is at a distance $d \approx 2.54664 \times 10^{15}$ from the origin.  Had we used the only known method until now (that is, Theorem~\ref{thm:2by2_case}), then we get a forest $\Hforest{5}$ with the following $x$ and $y$ values:
\begin{align*}
x &= \numprint{251 080 644 933 696 940 130 615 676 720 763 950}\\
y &= \numprint{108 580 359 501 475 197 963 484 708 875 960 338}.
\end{align*}
This forest is at a distance $d \approx 2.73553 \times 10^{35}$ from the origin, and hence is $1.07417 \times 10^{20}$ times farther than the forest we reveal in this paper!  We have not found a computationally tractable method to find the closest $5 \times 5$ hidden forest, nor do we believe that anyone else has.  So for the time being, the $\Hforest{5}$ we present in this paper is the closest $5 \times 5$ hidden forest to date.
\end{remark}


\section{Open problems and progress on recent research}\label{sec:open_problems}

There are many avenues for further research motivated from the work in this present paper. In this section, we give not only some open problems identified during our research process, but also some recent progress in generalizations of lattice point visibility.

\subsection{Open problems}

\begin{question}\label{question:every_hidden_forest_can_be_found_via_QP_matrix}
Is it true that for every hidden forest $\Hforest{n}$, there exists a quasiprime matrix $\QP$ in $\Mat_n(\mathbb{Z})$ such that the \texttt{CRT}-algorithm applied to $\QP$ yields $\Hforest{n}$? Related to this question are the following subquestions:
\begin{enumerate}[(a)]
\item Why do all \numprint{1 244 160} distinct quasiprime matrices in Matrix~(\ref{matrix:5x5}) yield exactly the same $x$-value solution under the \texttt{CRT}-algorithm?
\item Do all distinct quasiprime matrices produce unique solutions?
\item Can one code a computationally efficient method to search for the closest $\Hforest{n}$ for $n \geq 4$?
\end{enumerate}
\end{question}

\begin{question}
Higher dimensional analogues of patches of invisible points can be found.  Observe that our proof of Proposition~\ref{prop:fraction_of_visible_points} can easily be extended to higher dimensions by setting the value $s$ (in the proof) to the appropriate dimension.  That is, the probability that $(x_1, x_2, \ldots, x_s)$ is visible in $\mathbb{Z}^s$ is $\frac{1}{\zeta(s)}$.  In Example~\ref{exam:higher_dimension}, we find a hidden $2 \times 2 \times 2$ forest using a 3-dimensional analogue of the \texttt{CRT}-algorithm, and we see that the forest found by this method is very far from the origin.  Can we generalize the quasiprime matrix to these higher dimensional settings and find closer hidden $n$-dimensional forests?
\end{question}

\begin{example}\label{exam:higher_dimension}
In Figure~\ref{fig:three_dimensional_example}, we give an example of a hidden $2 \times 2 \times 2$ forest with corner point $(x_1,y_1,z_1)$ at $x_1=\numprint{9 126 194}$, $y_1=\numprint{8 286 564}$, and $z_1=\numprint{8 822 099}$.

\tikzset{node distance=3cm, auto}
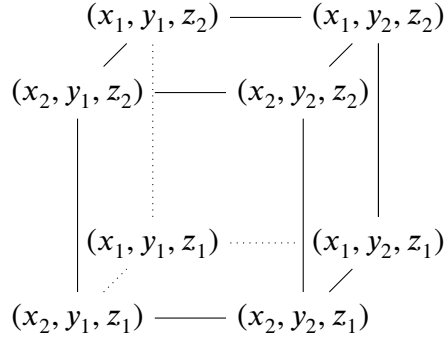
\begin{figure}[H]
\centering
\begin{tikzpicture}[%
  back line/.style={dotted},
  cross line/.style={preaction={draw=white, -,line width=6pt}}]
  \node (A) {$(x_2, y_1, z_2)$};
  \node [right of=A] (B) {$(x_2, y_2, z_2)$};
  \node [below of=A] (C) {$(x_2, y_1, z_1)$};
  \node [right of=C] (D) {$(x_2, y_2, z_1)$};
 
  \node (A1) [right of=A, above of=A, node distance=1cm] {$(x_1, y_1, z_2)$};
  \node [right of=A1] (B1) {$(x_1, y_2, z_2)$};
  \node [below of=A1] (C1) {$(x_1, y_1, z_1)$};
  \node [right of=C1] (D1) {$(x_1, y_2, z_1)$};
 
  \draw[back line] (D1) -- (C1) -- (A1);
  \draw[back line] (C) -- (C1);
  \draw[cross line] (D1) -- (B1) -- (A1) -- (A)  -- (B) -- (D) -- (C) -- (A);
  \draw (D) -- (D1) -- (B1) -- (B);
\end{tikzpicture}
\caption{A $2 \times 2 \times 2$ hidden forest}\label{fig:three_dimensional_example}
\end{figure}

To find this 3-dimensional hidden forest, we considered a 3-dimensional version of the prime matrix as a cube whose corners contain the first 8 prime numbers.  Then to each face of the cube, we multiplied the four numbers in each corner as the following image illustrates.
\begin{center}
\includegraphics[width=5in]{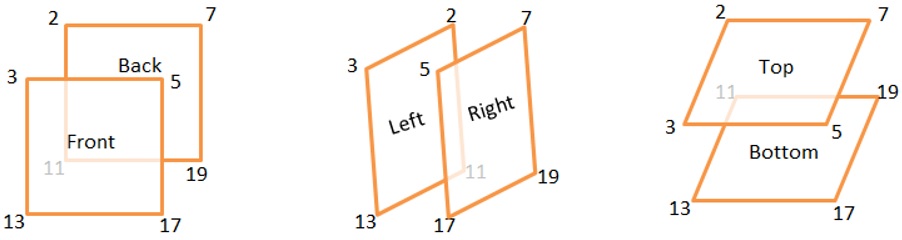}
\end{center}
Solving the following three pairs of systems of congruences
$$
\begin{cases}
x+1 \equiv 0 \pmod{\mbox{Back}}\\
x+2 \equiv 0 \pmod{\mbox{Front}}
\end{cases}
\hspace{.1in}
\begin{cases}
y+1 \equiv 0 \pmod{\mbox{Left}}\\
y+2 \equiv 0 \pmod{\mbox{Right}}
\end{cases}
\hspace{.1in}
\begin{cases}
z+1 \equiv 0 \pmod{\mbox{Bottom}}\\
z+2 \equiv 0 \pmod{\mbox{Top}}
\end{cases}
$$
yields the three simultaneous solutions $x_0=\numprint{9 126 193}$, $y_0=\numprint{8 286 563}$, and $z_0=\numprint{8 822 098}$. Then the following values $x_1$, $y_1$, $z_1$, $x_2$, $y_2$, and $z_2$ have the prime factorizations
\begin{align*}
x_1 &= 2 \cdot 7 \cdot 11 \cdot 19 \cdot 3119 &\hspace{.1in} y_1 &= 2^2 \cdot 3 \cdot 11^2 \cdot 13 \cdot 439 &\hspace{.1in} z_1 &= 11 \cdot 13 \cdot 17 \cdot 19 \cdot 191\\
x_2 &= 3 \cdot 5 \cdot 13 \cdot 17 \cdot 2753 & \hspace{.1in} y_2 &= 5 \cdot 7 \cdot 17 \cdot 19 \cdot 733 & \hspace{.1in} z_2 &= 2^2 \cdot 3 \cdot 5^2 \cdot 7 \cdot 4201.
\end{align*}
It is readily verified from these factorizations that each of the eight tuples of coordinates $(x_i, y_j, z_k)$ for $1 \leq i, j, k \leq 2$ have the property $\gcd(x_i, y_j, z_k) > 1$. Hence this 3-dimensional forest is indeed hidden from the origin.

\end{example}

\begin{question}
What can be said about hidden forests in the $\mathbb{Z}[i] \times \mathbb{Z}[i]$ lattice?  What is meant by the coordinate values $(x,y) \in \mathbb{Z}[i] \times \mathbb{Z}[i]$ being relatively prime?  Recall that if $R$ is a Euclidean domain (as is the case for the ring $\mathbb{Z}[i]$ of Gaussian integers), then greatest common divisors can be computed using the Euclidean algorithm.  Can we apply methods in this paper to the visibility of points in the lattice $\mathbb{Z}[i] \times \mathbb{Z}[i]$?
\end{question}

\subsection{Progress on recent research}
The following open problem was initially started by the second author Mbirika and his colleagues Pamela Harris and Bethany Kubik during their Visiting Assistant Professor appointments at West Point Military Academy in the summer of 2015. This question below has been recently explored in 2018 by Goins, Harris, Kubik, and Mbirika in~\cite{GHKM2017}.

\begin{question}[Harris, Kubik, Mbirika]
The classic setting focuses on integer lattice points which lie on straight lines through the origin with rational slopes.  We generalize this notion of lines of sights to include all curves through the origin given by power functions of the form $f(x) = ax^b$ where $a \in \mathbb{Q}$ and $b \in \mathbb{N}$.  What can we conclude about lattice point visibility in this generalized setting?  To begin to answer this question, we establish the following criterion for $b$-(in)visibility.
\end{question}

\begin{definition}[Visible and invisible lattice points] Fix $b\in \mathbb{N}$. A point $(r,s)\in \mathbb{N}\times\mathbb{N}$ is said to be \textit{$b$-invisible} if the following two conditions hold:
\begin{enumerate}[(1)]
\item The point $(r,s)$ lies on the graph of $f(x)=ax^b$ for some $a\in \mathbb{Q}$.  That is $s=ar^b$.
\item There exists an integer $k>1$ such that $k$ divides $r$ and $k^b$ divides $s$.
\end{enumerate}
The point is said to be \textit{$b$-visible} if it satisfies Condition (1) but fails to satisfy Condition (2).
\end{definition}
To speak about the $b$-visibility of a lattice point in this new setting, we develop a generalization of the greatest common divisor.
\begin{definition}\label{def:ggcd}
Fix $b \in \mathbb{N}$.   
The \textit{generalized greatest common divisor} of $r$ and $s$ with respect to $b$ is denoted $\ggcd_b$ and is defined as
$$\ggcd_b(r,s) := \max\{k \in \mathbb{N} \mid k \textrm{ divides } r \textrm{ and } k^b \textrm{ divides } s\}.$$
\end{definition}
The following result gives a necessary and sufficient condition to determine $b$-visibility.
\begin{proposition}\label{prop:visibility_criterion}
A point $(r,s)\in \mathbb{N}\times\mathbb{N}$ is $b$-visible if and only if $\ggcd_b(r,s)=1$.
\end{proposition}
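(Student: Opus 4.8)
The plan is to prove this exactly as Proposition~\ref{prop:visibility_of_a_point} was proved: by unwinding the definitions of $b$-visibility and of $\ggcd_b$, with one or two elementary observations recorded first. The first observation is that the set $S = \{k \in \mathbb{N} \mid k \text{ divides } r \text{ and } k^b \text{ divides } s\}$ is nonempty (it contains $1$, since $1^b = 1$) and bounded above by $r$ (because $k$ divides $r$), so the maximum defining $\ggcd_b(r,s)$ genuinely exists and satisfies $\ggcd_b(r,s) \geq 1$. The second observation is that Condition~(1) in the definition of $b$-(in)visibility is automatically satisfied by every $(r,s) \in \mathbb{N}\times\mathbb{N}$: one simply sets $a = s/r^b \in \mathbb{Q}$ so that $s = ar^b$. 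Hence $(r,s)$ is $b$-visible precisely when Condition~(2) fails.

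For the forward implication I would argue the contrapositive. Suppose $\ggcd_b(r,s) = d > 1$. Then $d$ divides $r$ and $d^b$ divides $s$, so $d$ witnesses Condition~(2); since Condition~(1) holds automatically, $(r,s)$ is $b$-invisible. Geometrically this is the natural statement: the point $(r/d,\, s/d^b)$ is then a lattice point lying on the curve $y = (s/r^b)x^b$ strictly between the origin and $(r,s)$, since $0 < r/d < r$, which is exactly what it means for $(r,s)$ to be obstructed.

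For the converse, suppose $(r,s)$ is $b$-invisible. Then Condition~(2) supplies an integer $k > 1$ with $k$ dividing $r$ and $k^b$ dividing $s$, so $k \in S$ and therefore $\ggcd_b(r,s) \geq k > 1$. Taking the contrapositive of each of the two implications just established yields the equivalence: $(r,s)$ is $b$-visible if and only if $\ggcd_b(r,s) = 1$.

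I do not expect a real obstacle here; the content is purely definitional, and the only point needing a word of care is the well-definedness of $\ggcd_b$, i.e.\ the finiteness of $S$, which is handled by the opening remark. The genuine work in this generalized framework lies not in this proposition but in the analogues of the density computation (Proposition~\ref{prop:fraction_of_visible_points}) and of the hidden-forest constructions for $b$-visibility, where the asymmetric condition ``$k$ divides $r$ and $k^b$ divides $s$'' actually forces new arguments; see~\cite{GHKM2017}.
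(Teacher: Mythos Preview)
Your argument is correct. The paper itself does not supply a proof of Proposition~\ref{prop:visibility_criterion}; it is stated without proof in the open-problems section and deferred to~\cite{GHKM2017}, so there is nothing in the present paper to compare against line by line. That said, your approach is exactly the natural one and mirrors the paper's proof of Proposition~\ref{prop:visibility_of_a_point}: you observe that Condition~(1) is vacuous over $\mathbb{N}\times\mathbb{N}$ (take $a=s/r^b$), so $b$-visibility reduces to the failure of Condition~(2), which is precisely the statement $\ggcd_b(r,s)=1$. Your remark that the set $S$ is nonempty and bounded, so the maximum exists, is a nice point of hygiene; the rest is, as you say, purely definitional.
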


Figure~\ref{fig:classic_and_generalized_curves} demonstrates both the classic and generalized setting. The \textcolor{red}{\bf red} curve \textcolor{red}{\bf $f(x)=7x$} represents the classic setting while the \textcolor{blue}{\bf blue} and \textcolor{green}{\bf green} curves \textcolor{blue}{\bf $g(x)=x^2$} and \textcolor{green}{\bf $h(x)=\frac{1}{7}x^3$}, respectively, represent the generalized setting.


\begin{figure}[h!]
\begin{center}
\begin{tikzpicture}[yscale=.1,xscale=1.75, domain=0:7.883735]  

\foreach \x in {1,...,8}
		\draw [very thin] (\x,0) -- (\x,70);

\foreach \x in {7,14,...,63,70}
		\draw [very thin] (0,\x) -- (8,\x);
		
\draw [<->, ultra thick, brown] (0,75) -- (0,0) -- (8.25,0);

\draw[very thick,red] (0,0) -- (8,56);
\draw[very thick,blue] (0,0) parabola (8,64);
\draw[very thick,green] plot (\x,{(1/7)*\x^3});

\node at (1,7) {\tikzcircle[fill=black]{2.5pt}};
\foreach \x in {2,...,6}
	\node at (\x,\x*7) {\tikzcircle[fill=white]{2.5pt}};
\node at (8,56) {\tikzcircle[fill=white]{2.5pt}};

\node at (1,1) {\tikzcircle[fill=black]{2.5pt}};
\foreach \x in {2,...,6}
	\node at (\x,\x*\x) {\tikzcircle[fill=white]{2.5pt}};
\node at (8,64) {\tikzcircle[fill=white]{2.5pt}};

\node at (7,49) {\tikzcircle[fill=white,white]{2.75pt}}; 
\node at (7,49) {$\circlerighthalfblack$};	

\node [above left] at (1,6.5) {\small(1,7)};
\node [above left] at (2,13.5) {\small(2,14)};
\node [above left] at (3,20.5) {\small(3,21)};
\node [above left] at (4,27.5) {\small(4,28)};
\node [above left] at (5,34.5) {\small(5,35)};
\node [above left] at (6,41.5) {\small(6,42)};

\node [above left] at (8,55.5) {\small(8,56)};

\node [above left] at (1,.5) {\small(1,1)};
\node [above left] at (2,3.5) {\small(2,4)};
\node [above left] at (3,8.5) {\small(3,9)};
\node [above left] at (4,15.5) {\small(4,16)};
\node [above left] at (5,24.5) {\small(5,25)};
\node [above left] at (6,35.5) {\small(6,36)};
\node [above left] at (7,48.5) {\small(7,49)};
\node [above left] at (8,63.5) {\small(8,64)};

\foreach \x in {1,...,8}
		\node [below] at (\x,-.75) {\x};
\foreach \x in {7,14,...,63,70}
		\node [left] at (-.075,\x) {\x};
\node at (0,0) {\includegraphics[width=8ex]{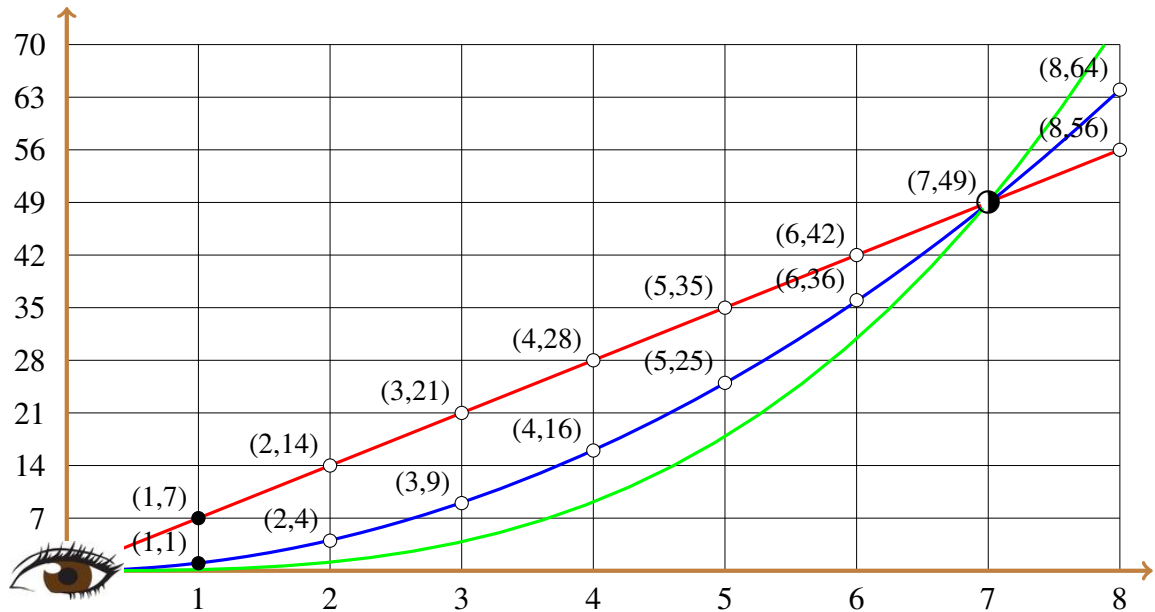}};

\end{tikzpicture}
\end{center}
\caption{Three lines of sight $f(x) = 7x$, $g(x) = x^2$, and $h(x) = \frac{1}{7}x^3$.}
\label{fig:classic_and_generalized_curves}
\end{figure}

Observe that the point $(7,49)$ is not 1-visible since $\gcd(7,49)=7$ and is not 2-visible since $\ggcd_2(7,49)=7$. However the point $(7,49)$ is 3-visible since $\ggcd_3(7,49)=1$.

\begin{theorem}[Goins, Harris, Kubik, Mbirika~\cite{GHKM2017}]
Fix an integer $b\in\mathbb{N}$.  Then the proportion of points $(r,s)\in\mathbb{N}\times\mathbb{N}$ that are $b$-visible is $\displaystyle\frac{1}{\zeta(b+1)}$.
\end{theorem}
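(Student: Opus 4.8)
The plan is to transcribe the proof of Proposition~\ref{prop:fraction_of_visible_points}, replacing the exponent $2$ throughout by $b+1$. The first step is a purely number-theoretic reduction. By Proposition~\ref{prop:visibility_criterion}, $(r,s)$ is $b$-visible exactly when $\ggcd_b(r,s)=1$, and I claim this fails precisely when there exists a prime $p$ with $p\mid r$ and $p^b\mid s$. Indeed, if $\ggcd_b(r,s)=k>1$ then any prime $p\mid k$ satisfies $p\mid r$ and $p^b\mid k^b\mid s$; conversely, such a prime $p$ is an admissible value of $k$ in Definition~\ref{def:ggcd}, so $\ggcd_b(r,s)\ge p>1$. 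Thus it suffices to compute the proportion of pairs $(r,s)$ for which no prime $p$ simultaneously satisfies $p\mid r$ and $p^b\mid s$.

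Working in the limiting regime described before Proposition~\ref{prop:fraction_of_visible_points} (the uniform distribution on $\{1,\dots,N\}$, then $N\to\infty$), for a fixed prime $p$ the probability that $p\mid r$ is $\tfrac1p$ and the probability that $p^b\mid s$ is $\tfrac1{p^b}$; since $r$ and $s$ are chosen independently, the joint probability is $\tfrac1{p^{b+1}}$, so the complementary event has probability $1-\tfrac1{p^{b+1}}$. These prime-indexed events are mutually independent: for distinct primes $p_1,\dots,p_m$, requiring $p_i\mid r$ for every $i$ is requiring $\bigl(\prod_i p_i\bigr)\mid r$, and likewise $\bigl(\prod_i p_i\bigr)^b\mid s$, which factor as products over $i$ by the Chinese Remainder Theorem applied separately to $r$ and to $s$. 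Hence the proportion of $b$-visible points equals $\prod_p\bigl(1-p^{-(b+1)}\bigr)$, and Euler's product formula (used exactly as in the proof of Proposition~\ref{prop:fraction_of_visible_points}, now with $s=b+1>1$) gives $\sum_{n\ge1}n^{-(b+1)}=\prod_p(1-p^{-(b+1)})^{-1}$, so taking reciprocals yields $\prod_p(1-p^{-(b+1)})=\zeta(b+1)^{-1}$, as claimed.

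If one wishes to avoid the heuristic language of probability on $\mathbb{N}$, the same count is obtained rigorously by inclusion--exclusion. Let $V_b(N)$ be the number of $b$-visible pairs in $\{1,\dots,N\}^2$ and let $A_d(N)=\lfloor N/d\rfloor\lfloor N/d^b\rfloor$ be the number of pairs with $d\mid r$ and $d^b\mid s$. Inclusion--exclusion over the primes $p$ with $p\mid r$ and $p^b\mid s$ gives $V_b(N)=\sum_{d\ge1}\mu(d)A_d(N)$, a finite sum since $A_d(N)=0$ once $d>N$. Dividing by $N^2$, using $\lfloor N/d\rfloor/N\to 1/d$ and $\lfloor N/d^b\rfloor/N\to 1/d^b$, and dominating the tail by the convergent series $\sum_d d^{-(b+1)}$ (convergent because $b+1\ge 2$), one passes to the limit and obtains $\lim_{N\to\infty}V_b(N)/N^2=\sum_{d\ge1}\mu(d)d^{-(b+1)}=1/\zeta(b+1)$. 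The step I expect to require the most care is precisely this independence / inclusion--exclusion step, since here the two divisibility conditions are linked through the common exponent $b$; but it is handled uniformly by the observation that ``$p\mid r$ and $p^b\mid s$ for all $p$ in a finite set $S$'' is literally ``$\bigl(\prod_{p\in S}p\bigr)\mid r$ and $\bigl(\prod_{p\in S}p\bigr)^b\mid s$'', which splits multiplicatively because divisibility by coprime moduli is independent and $r,s$ are independent. Everything else is a verbatim copy of the $b=1$ argument.
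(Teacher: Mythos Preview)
Your argument is correct. Note, however, that the present paper does not actually prove this theorem: it is quoted as a result of Goins--Harris--Kubik--Mbirika and cited to~\cite{GHKM2017}, with no proof supplied here. So there is no ``paper's own proof'' to compare against in this document.

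That said, your approach is exactly the natural one and is precisely the extension the paper itself hints at (compare the remark in Question~6.2 that the proof of Proposition~\ref{prop:fraction_of_visible_points} extends by changing the exponent). Your heuristic-probability paragraph is a faithful transcription of that proof with $s=b+1$, and your number-theoretic reduction of $\ggcd_b(r,s)>1$ to the existence of a prime $p$ with $p\mid r$ and $p^b\mid s$ is clean and correct. The additional rigorous version via M\"obius inclusion--exclusion, $V_b(N)=\sum_{d\ge1}\mu(d)\lfloor N/d\rfloor\lfloor N/d^b\rfloor$, with the tail dominated by $\sum_d d^{-(b+1)}$, is the standard way to make such density statements precise and goes a step beyond what the paper does even in the classical $b=1$ case.
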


\begin{theorem}[Goins, Harris, Kubik, Mbirika~\cite{GHKM2017}]
For every $m,n,b\in\mathbb{N}$, there exists $b$-invisible $n\times m$ forests.
\end{theorem}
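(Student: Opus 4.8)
The plan is to run the \texttt{CRT}-algorithm of Theorem~\ref{thm:2by2_case} with a single modification: the column products are raised to the $b$-th power so as to meet Condition~(2) in the definition of $b$-invisibility.

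First I would record the reduction. Every pair $(r,s)\in\mathbb{N}\times\mathbb{N}$ lies on $f(x)=ax^b$ with $a=s/r^b\in\mathbb{Q}$, so Condition~(1) is automatic; hence by Proposition~\ref{prop:visibility_criterion} a lattice point $(r,s)$ is $b$-invisible exactly when $\ggcd_b(r,s)>1$, i.e.\ when some prime $p$ satisfies $p\mid r$ and $p^b\mid s$. Thus it suffices to produce $x_0,y_0\in\mathbb{N}$ such that for every $1\le i\le n$ and $1\le j\le m$ there is a prime dividing $x_0+i$ whose $b$-th power divides $y_0+j$; the $nm$ points $\{(x_0+i,\,y_0+j)\}$ then constitute the desired $b$-invisible $n\times m$ forest.

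Next I would carry out the construction. Fix $nm$ distinct primes and arrange them in an $n\times m$ array $(p_{i,j})$, say row by row as in Definition~\ref{def:prime_matrix}. Let $R_i=\prod_{k=1}^{m}p_{i,k}$ be the $i$-th row product and $C_j=\prod_{k=1}^{n}p_{k,j}$ the $j$-th column product. Since distinct rows use disjoint sets of primes, the $R_i$ are pairwise coprime; likewise the $C_j$, and therefore the $C_j^{\,b}$, are pairwise coprime. By the Chinese Remainder Theorem the system $x+i\equiv 0\pmod{R_i}$ (for $1\le i\le n$) has a solution, and the system $y+j\equiv 0\pmod{C_j^{\,b}}$ (for $1\le j\le m$) has a solution; choose positive representatives $x_0$ and $y_0$. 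For any pair $(i,j)$ the prime $p_{i,j}$ divides $R_i$, hence $x_0+i$, while $p_{i,j}^{\,b}$ divides $C_j^{\,b}$, hence $y_0+j$; thus $\ggcd_b(x_0+i,\,y_0+j)\ge p_{i,j}>1$, so each of the $nm$ points $(x_0+i,\,y_0+j)$ is $b$-invisible by the reduction above. (As in the proof of Theorem~\ref{thm:2by2_case}, the blocks of consecutive integers $\{x_0+i\}$ and $\{y_0+j\}$ are automatically disjoint when $\min(n,m)\ge 2$, since consecutive integers share no prime divisor; the degenerate cases $n=1$ or $m=1$ are handled, if one insists on disjoint coordinate ranges, by shifting one block exactly as in the $n=1$ case of Theorem~\ref{thm:2by2_case}.)

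I do not anticipate a substantial obstacle here: the argument is a faithful generalization of the known $b=1$ case, and the one point requiring care is that passing from $C_j$ to $C_j^{\,b}$ preserves pairwise coprimality of the moduli, which is immediate because the $C_j$ are built from pairwise disjoint sets of primes. The only other thing to double-check is that Condition~(1) of $b$-invisibility is vacuous for integer points, so that Proposition~\ref{prop:visibility_criterion} applies verbatim.
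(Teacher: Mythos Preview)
Your argument is correct. The reduction via Proposition~\ref{prop:visibility_criterion} is clean (Condition~(1) is indeed vacuous for positive integer points, since $a=s/r^b\in\mathbb{Q}$ always works), and the modified \texttt{CRT}-algorithm with column moduli $C_j^{\,b}$ does exactly what is needed; pairwise coprimality of the $C_j^{\,b}$ follows immediately from that of the $C_j$, as you note.

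As for comparison: the present paper does not supply a proof of this theorem at all---it is quoted from~\cite{GHKM2017} as part of the survey of related work in Section~\ref{sec:open_problems}. That said, your construction is precisely the expected adaptation of Theorem~\ref{thm:2by2_case} to the $b$-visibility setting, and it is the argument one would anticipate appearing in~\cite{GHKM2017}: replace the square prime array by an $n\times m$ rectangular one and raise the column moduli to the $b$-th power. There is nothing more to it, and you have handled the small bookkeeping items (positivity of the CRT solutions, the disjointness remark for degenerate sizes) appropriately.
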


\begin{question}
Can we apply the new techniques detailed in this paper to find the closest $b$-invisible $n \times n$ forests?
\end{question}




\newpage


\begin{appendices}
\renewcommand{\thesection}{A}
\section*{Appendix}
\tocless
\subsection{Java code to verify closest hidden forests\label{appendix_A}}
In this appendix we provide the Java code that we wrote to exhaustively search the integer lattice for the closest hidden forests.

  \begin{lstlisting}[frame=single,language=Java,caption={Java code to search the integer lattice for hidden forests}]
package project1;
 
import java.util.Scanner;
 
public class Compiler {
 
      public static void main(String[] args) {
            Scanner in = new Scanner(System.in);
            // long is a number;
            // get bottom, leftmost, and rightmost from user input;
						
            System.out.println("What is the bottom?");
            long bottom = in.nextLong();
						
            System.out.println("What is the leftmost?");
            long leftmost = in.nextLong();
						
            System.out.println("What is the rightmost?");
            long rightmost = in.nextLong();
						
            long boxWidth = rightmost - leftmost;
            long count = 0;
            long foundCount = 0;
						
            System.out.println("How often do you want to check?");
            long check = in.nextLong();
						
            System.out.println("Starting");
            boolean exit = false;
						
            while (bottom < Long.MAX_VALUE - boxWidth && !exit) {
                  boolean equalsOne = false;
                  long startY = bottom+1;
                  for (long y = bottom; y < (startY + boxWidth) && !equalsOne
                              && !exit; ++y) {
                        ++count;
                        for (long x = leftmost; (x <= rightmost) && !equalsOne && !exit; ++x) {
                              ++foundCount;
                              if (test(x, y) == 1) {
                                    bottom = y;
                                    equalsOne = true;
                                    foundCount = 0;
                                    x = leftmost;
                              }// test end
                              if (foundCount == ((boxWidth+1)*(boxWidth+1))) {
                                    System.out.println("found one!! upper right corner = x:"
                                                + x + " y:" + y);
                                    exit = true;
                                    foundCount = 0;
                              }// foundCount end
                        }// for x end
                        if (count % check == 0) {
                              System.out.println(y + " is current y");
                        }// count%check end
                  }// for y end
                  ++bottom;
            }// while 1 end
      }// main end
 
      private static long test(long x, long y) {
            if (y == 0)
                  return x;
            return test(y, x % y);
      }
}// compiler end
  \end{lstlisting}
	
\end{appendices}

\end{document}